\documentclass[a4paper, 12pt]{article}
\usepackage[utf8]{inputenc}
\usepackage[margin=1.1in]{geometry} 
\usepackage[T1]{fontenc}
\usepackage{tgtermes}
\usepackage{amsthm}
\usepackage{amsmath}
\usepackage{bm}
\usepackage{enumitem}
\usepackage{amsfonts}
\usepackage{amssymb}
\usepackage{mathtools}
\usepackage{appendix}
\usepackage{mathrsfs}
\usepackage{setspace}
\usepackage{comment}
\usepackage{xcolor}
\usepackage{thmtools} 
\usepackage[pdfdisplaydoctitle,colorlinks,breaklinks,urlcolor=blue,linkcolor=blue,citecolor=blue]{hyperref} 
\usepackage[nameinlink,capitalise]{cleveref}

\newcommand{\B}{\mathcal{B}}

\newcommand{\D}{\mathcal{D}}

\newenvironment{acknowledgements}{%
  \begin{abstract}
}{%
  \end{abstract}
}

\newcommand{\EE}{\mathbb{E}}
\newcommand{\F}{\mathcal{F}}
\newcommand{\G}{\mathcal{G}}

\newcommand{\X}{\mathbb{X}}

\renewcommand{\L}{\mathscr{L}}

\newcommand{\N}{\mathbb{N}}
\newcommand{\NN}{\mathcal{N}}

\newcommand{\PP}{\mathbb{P}}

\newcommand{\R}{\mathbb{R}}

\DeclareMathOperator{\trace}{Tr}

\DeclareMathOperator{\weakstarto}{\overset{\ast}{\rightharpoonup}}

\renewcommand{\epsilon}{\varepsilon}

\newcommand{\one}{\bm{1}}

\newcommand{\bra}[1]{\left[#1\right]}

\newcommand{\brak}[1]{\left\langle#1\right\rangle}

\newtheorem{theorem}{Theorem}[section]
\newtheorem{definition}[theorem]{Definition}
\newtheorem{assumption}[theorem]{Assumption}

\newtheorem{corollary}[theorem]{Corollary}
\newtheorem{lemma}[theorem]{Lemma}
\newtheorem{proposition}[theorem]{Proposition}

\theoremstyle{remark}
\newtheorem{remark}[theorem]{Remark}
\newtheorem{ex}[theorem]{Example}

\numberwithin{equation}{section}

\makeatletter
\newcommand{\subjclass}[2][2020]{%
  \let\@oldtitle\@title%
  \gdef\@title{\@oldtitle\footnotetext{#1 \emph{Mathematics subject classification.} #2}}%
}
\newcommand{\keywords}[1]{%
  \let\@@oldtitle\@title%
  \gdef\@title{\@@oldtitle\footnotetext{\emph{Key words and phrases.} #1.}}%
}
\makeatother
\title{Ergodicity for a class of stochastic Hasegawa-Mima equations with dissipation}
\author{Federico Butori \thanks{Scuola Normale Superiore, Piazza dei Cavalieri 7, Pisa, Italy.
  \texttt{federico.butori@sns.it}} \and Ciro Campolina \thanks{Scuola Normale Superiore, Piazza dei Cavalieri 7, Pisa, Italy.
  \texttt{ciro.campolina@sns.it}}\and Franco Flandoli \thanks{Scuola Normale Superiore, Piazza dei Cavalieri 7, Pisa, Italy.
  \texttt{franco.flandoli@sns.it}}}
\keywords{Plasma physics, Magnetohydrodynamics, Invariant Measures, Ergodicity}
\subjclass{60H15, 76D06, 60F99, 35R60, 60G10, 82D10}
\date{}
\allowdisplaybreaks

\begin{document}
\maketitle
\begin{abstract}
	This work deals with the ergodicity of the stochastic dissipative Hasegawa--Mima equations in a bounded domain driven by additive noise. The uniqueness of the invariant measure is obtained by the asymptotic coupling technique. 
\end{abstract}

\section{Introduction}
We are interested in establishing the uniqueness of invariant measures for the following stochastic Hasegawa-Mima equation with dissipation:
\begin{equation}\label{eq:main}
	d(\phi -\Delta \phi) + \left(A \phi + \nabla^\perp \phi\cdot \nabla \left(v-\Delta \phi  \right)\right)dt =  dW_t\\
\end{equation}
where $v= \log n_0$ and $n_0$ is a smooth background density field, $\D$ is a smooth domain of $\R^2$, $W$ is a white noise appropriately colored in space, and $A$ is some operator modelling dissipation. To close the system we have to specify boundary conditions. We will always assume $\phi_{|\partial \D}=0$, which would be sufficient to close the system in the ideal case ($A=0$). In the dissipative case of interest here,
due to a lack of general consensus in the literature on the choice of a dissipative mechanism and, in turn, on boundary conditions, we remain flexible providing a functional framework based on abstract assumptions on the operator $A$. We provide two concrete examples of dissipative operators compatible with our framework in \Cref{example_op}. 
The precise interpretation of the equation is then given in Section~\ref{sec:main} below and
the physical meaning is described in Subsection \ref{Subsect physics}. 

This equation may be rewritten in a form similar to the stochastic
Navier-Stokes equations. Namely, we can interpret $\phi$ as a stream function so that $\nabla^\perp \phi$ represents a divergence-free velocity field while $\Delta \phi$ plays the role of the vorticity field. Thus, the nonlinearity contains an active transport term analogous to the one in the vorticity formulation of 2D Navier-Stokes. Thanks to this similarity, the solution theory of the deterministic equation shares some similarities with the Navier-Stokes equations, and indeed in the ideal case $(A=0)$, Yudovich-type solutions can be constructed as recently proven by Flandoli and Tahraoui in \cite{flandoli_hasegawamima_2025a}. 
The purpose of this paper is to prove the ergodicity of a stochastic and dissipative version of these equations.

The first result of
ergodicity for the stochastic Navier-Stokes equations is due to Flandoli and
Maslowski \cite{flandoli_ergodicity_1995}; the approach followed in that paper applies a theorem of Doob, properly revised in infinite dimensions,
following \cite{daprato_stochastic_}; a feature of that method is the need for a noise
acting on all Fourier components. The paper \cite{flandoli_ergodicity_1995} originated a
long stream of research, mostly aimed at reducing the number of Fourier
components needed for the ergodicity --- notice that existence of an invariant
measure holds even without noise --- with contributions from many authors. To
mention only a few, there were intermediate results under different
assumptions by \cite{KuksinS}, \cite{E}, \cite{BrichKupiainen} and many
others, and some outstanding contributions, either for their generality or for
the applicability to many other systems, namely \cite{glatt-holtz_unique_2017},
\cite{hairer_ergodicity_2006} (again among others with important variants, like
\cite{Bukow}). Ergodicity was of course considered for many other models,
different from the 2D Navier-Stokes equations; it is impossible to review the
contributions. The present paper is the application of the methodology of
\cite{glatt-holtz_unique_2017} to the model \eqref{eq:main}. At the same time, the estimates required to apply such techniques share a lot of analogies with the proof of continuous time data assimilation. See \cite{bessaih2025continuous, bessaih2015continuous} for examples of results and in particular \cite{farhat2020data, oljaca2018almost} for abstract results that seem to cover the model introduced here.

\subsection{About the physics and open questions\label{Subsect physics}}

Equation \eqref{eq:main} is a stochastic version of the Hasegawa-Mima
equation (HME) \cite{HasegawaMima}, \cite{Horton}. The HME has been introduced
around 1977 as a reduced model of a plasma observed under certain space-time
scales - another famous model under different scales, just to compare, are the
reduced magnetohydrodynamic equations. In spite of its simplicity, the HME
should incorporate sufficient phenomena to observe features like turbulence,
waves, and stretching by density gradient, known to exist in plasma. Compared
to the classical 2D Euler or Navier-Stokes equations, here there are in addition waves and particle density gradients,
typical of plasma.

The physical meaning of the solution $\phi\left(  x,t\right)  $ is of an
electrostatic potential and the vector field $u\left(  x,t\right)
:=\nabla^{\perp}\phi\left(  x,t\right)  $ has the meaning of ion velocity ---
more precisely, it is a first-order approximation of the true velocity, which
should contain also some compressible lower-order terms. In the analogy with
the Navier-Stokes equations, $\phi\left(  x,t\right)  $ corresponds to the
stream function, but there we only have the term $\partial_{t}\Delta\phi$
(where $\Delta\phi$ is the vorticity) instead of $\partial_{t}\left(
\phi-\Delta\phi\right)  $. The term $n_{0}\left(  x\right)$, appearing in the definition of $v$, is the
(background) average particle density. 

The classical HME \cite{HasegawaMima} is inviscid and not forced by noise.
Viscosity or other forms of damping/diffusion have been introduced many times
in later literature, but since there is no general agreement on the precise
form, in our model below we have made a generic choice of an unbounded
operator with certain properties which covers various cases. 

Concerning the noise, that, as mentioned above, was not included in the original
model, it is motivated by the injection of energy at small scales (comparable
to the so-called Larmour radius scale) by instabilities, like the
ITG\ instability. The stochastic model has already been introduced in the
physics literature, see for instance \cite{Krommes}, \cite{Kim}. When we
introduce an additive noise, we have to balance it by a dissipation, to keep
an average energy in the system, hence the need of a dissipation term.

Among the questions about the HME as a plasma model that are partially open
and require more investigation, we may quote the properties of the inverse
cascade - for this reason it is relevant to force it at small scales - and the
properties of the large-scale structures emerging from the cascade, the
interaction with the drift wave dynamics and the density-stretching near the
boundary. All these ingredients are incorporated a priori in the HME but it is
not easy to extract information from mathematical studies. The invariant
measure (introduced in this paper) is conceptually the basic object which
should contain part of such information. Since even for the Navier-Stokes
equations the understanding of statistical and turbulence properties from the
invariant measure is a difficult problem, the same is true for the HME. But it
is important to settle the equation and the ergodicity of the invariant
measure in rigorous terms in order to start the investigation.

Concerning the noise, an interesting and very difficult mathematical question
arises. The most natural noise, as said above, is one acting on certain small
scales, to describe small-scale instabilities which inject energy into the
plasma and perturb a potentially steady state. However, from the purely
mathematical viewpoint, if the noise acts only on a few small-scale
components, it is difficult to understand the global properties of the system.
Heuristically, the energy injected at
small scales "cascades" to larger scales, a typical phenomenon of
two-dimensional fluid systems; and simultaneously some flux of information to
smaller scales also exists - the so-called direct enstrophy cascade. The
"signal" propagates among Fourier modes and spreads in all possible
directions. However, proving and controlling this process is very difficult.
In the present paper we limit ourselves to activating all Fourier components up
to a certain level, but we cannot restrict ourselves to activating only
\textit{some} small-scale components; the technique developed by \cite{glatt-holtz_unique_2017}
that we employ does not allow it. Conceptually, the approach of
\cite{hairer_ergodicity_2006} goes in this direction but much work has to be done to
make it work in the present case. Notice, for comparison, that for the
classical stochastic Navier-Stokes equations the main literature claimed that
the interesting model is noise only at the largest scales, cf.
\cite{BrichKupiainen}, which motivated \cite{glatt-holtz_unique_2017} and \cite{hairer_ergodicity_2006}
and many others. In the case of the HME, the natural activation is at certain
small-scales and this should be better understood from the viewpoint of
ergodicity. Let us mention, in this direction, \cite{RomitoXu} where all modes
from a certain scale to the smallest ones (infinitely many, as in Doob theory)
are assumed to be activated by the noise.

\section{Preliminaries}
	\subsection*{Notation}
	In the following, we will employ the following notation: given a smooth domain $\D\in \R^2$, $\eta$ represents the unit outward normal to $\partial \D$; given a vector $v=(v_1, v_2)\in \R^2$, $v^\perp = (v_2, -v_1)$ and for a scalar function $f$, $\nabla^\perp f := (\nabla f)^\perp $; $\cdot$ will represent the scalar product in $\R^d$; given two vectors in $\R^d$, we denote their Kronecker product as $(v\otimes w)_{ij} = v_iv_j$; we denote with the apex $^t$ the matrix transpose and for any two matrices $A, B$ we denote their scalar product with $A:B= \trace(AB^t)$.
\subsection{Functional Setting}\label{sec:funct-sett}
Let $\mathcal{D \subseteq\R}^2$ be a bounded smooth domain. We denote by $C^{\infty}_c(\mathcal{D})$ the set of smooth functions on $\D$ with compact support. 
We will use $|\cdot |$ for the norm on $L^2(\D)$ and $\brak{\cdot, \cdot}$ to denote the usual $L^2$ inner product while we use $|\cdot|_{p}$ for $L^p(\D)$ norms for $p \neq 2$. With some abuse of notation, we use the same symbol $\brak{\cdot, \cdot}$ also for vector and matrix-valued functions, implicitly replacing the product with the scalar product $\cdot$ and with the product $:$, respectively. 
We denote by $W^{s,p}(\D)$ the usual Sobolev spaces, and we denote with $W^{s,p}_0$ the subspace consisting of the closure of $C^{\infty}_c(\D)$. Given $s\in (0,1)$, $p\ge 1$ and a separable Banach space $\X$ we will consider also the vector-valued fractional Sobolev spaces $W^{s,p}([0, T]; \X)$ endowed with the norm
$$\|f\|_{W^{s,p}} = |f|_{p } +  \left(\int_0^T \int _0^T \frac{\|f(t) - f(s)\|_\X^p}{|t-s|^{1+sp}}dsdt\right)^{1/p}.$$
We consider on the space $L^2(\D)$ the self-adjoint and positive operator $-\Delta :W^{2,2}(\D) \cap W^{1,2}_0(\D)\subset L^2(\D) \rightarrow L^2(\D)$. We set $V:= W^{1,2}_0(\D) = D((-\Delta)^{1/2})$ endowed with the graph norm $\|x\|_V= |(I-\Delta)^{1/2}x| $. In particular we consider on $V$ the scalar product
$$\brak{x,y}_V:= \brak{(I-\Delta)^{1/2}x, (I-\Delta)^{1/2}y} = \brak{x,y} + \brak{\nabla x, \nabla y}.$$
The reason for considering the non-homogeneous norm on $V$ comes from the fact that since the dynamical variable in \eqref{eq:main} is $\phi-\Delta\phi$, this is the natural energy space for our equation. 
Finally we set $V'=V^{-1}$ the dual space of $V$ in the sense that the duality extends the $L^2$ inner product. We recall also the Poincaré inequality $|x|_p \lesssim_p |\nabla x|_p$, for every $p\ge 1$ and $x \in W_0^{1,p}$.  
For $s\in \R$ we set $V^s:= D((-\Delta)^{s/2})$. We recall that for any $s>r$, $V^s$ embeds compactly in $V^r$.

Finally, we make the following assumptions on $A$:

\begin{assumption} \label{ass}
	The operator $A$ satisfies:
	\begin{enumerate} 
		\item $A: D(A) \subset L^2(\D) \rightarrow L^2(\D)$ is densely defined, self-adjoint and strictly positive.
		\item The operator $A$ is coercive, i.e. there exists $C>0$ such that  $\brak{Ax, x} \ge C |x|^2$, for all $x\in D(A)$. Thus we can set $$W := D(A^{1/2}), \qquad  \| x \|_W := | A^{1/2} x |.$$ 
		\item\label{ass1.2}   $D(A)\subset D(I-\Delta)$ and we have the continuous embedding $W \hookrightarrow V^2$ (i.e. $A$ is also $V^2-$coercive). In particular $W$ embeds compactly in $V$.
	\end{enumerate}
\end{assumption}

\begin{remark}[\emph{A special basis}]\label{rem:onb}
	Due to  \Cref{ass}, and in particular the compactness of the embedding $W\hookrightarrow V$, thanks to (cf. \cite[Theorem 4.1]{laugesen_spectral_2012}) we can construct an orthonormal basis $\{e_k\}_{k\ge 1}$ of $V$ composed by `eigenvectors' of $A$ with respect to $\brak{\cdot, \cdot }_V$ and corresponding eigenvalues $0 < \lambda_1 \le \lambda_2 \le \ldots \rightarrow + \infty$, i.e. such that for every $v \in W$ it holds
	\begin{equation}\label{eq:eigen-def}\brak{A^{1/2}e_k, A^{1/2}v}= \lambda_k \brak{e_k, v}_V.\end{equation}
	In particular, the eigenvectors $e_k$ are orthogonal both in $W$ and in $V$.
	Moreover, all $e_k \in D(A)$.
	Indeed from $D(A^{1/2})\subset V^2$ we have from \eqref{eq:eigen-def}, for every $v\in D(A)$, $\brak{e_k, Av}\le \lambda_k \|e_k\|_{V^2}|v|\lesssim  \|e_k\|_{W}|v|$, so that $e_k \in D(A^*)=D(A)$.
\end{remark}
Consider $P_N:V \rightarrow V_N$ the orthogonal projection of $V$ onto $V_N=\operatorname{span}\{e_1, \ldots e_N\}$, defined as $P_Nx = \sum_{k=1}^N \brak{x, e_k}_V e_k$ and $ Q_N= I- P_N$. Because $e_k \in D(A)$, we have that $P_Nx = \sum_{k=1}^N\brak{x, (I-\Delta)e_k}e_k$ naturally extends to the whole $L^2$. In what follows we will make use of the lemma below. 

\begin{lemma}\label{lem:proj-est2}
	There exists $C>0$ such that for all $x\in D(A^{1/2})$ we have 
	\begin{align}
		\|x\|_{W}^2 &\ge  \lambda_N\|Q_N x\|_V^2 \\
		\|x\|^2_W &+ \lambda _N \|P_Nx\|_V^2 \ge \lambda_N\|x\|_V^2
	\end{align}
\end{lemma}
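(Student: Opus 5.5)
The plan is to expand $x$ in the basis $\{e_k\}$ of \Cref{rem:onb} and compute both sides of each inequality in terms of the coefficients.

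\textbf{Expansion of the $W$-norm.} For $x\in W=D(A^{1/2})$, write $x=\sum_k x_k e_k$ with $x_k=\brak{x,e_k}_V$; the series converges in $V$. Relation \eqref{eq:eigen-def} shows that $\{e_k/\sqrt{\lambda_k}\}$ is orthonormal in $W$, since $\brak{A^{1/2}e_k,A^{1/2}e_j}=\lambda_k\delta_{kj}$. I also need completeness in $W$. Suppose $v\in W$ is $W$-orthogonal to every $e_k$. Then \eqref{eq:eigen-def} gives $\lambda_k\brak{e_k,v}_V=0$ for all $k$, and since $\lambda_k>0$ this forces $v=0$ by completeness of $\{e_k\}$ in $V$. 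Hence
\[
\|x\|_W^2=\sum_k \frac{|\brak{x,e_k/\sqrt{\lambda_k}}_W|^2}{1}.
\]
Applying \eqref{eq:eigen-def} with $v=x$ gives $\brak{x,e_k}_W=\lambda_k x_k$, so $\|x\|_W^2=\sum_k\lambda_k x_k^2$.

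\textbf{The two inequalities.} Since $\lambda_k$ is nondecreasing, the first inequality follows directly:
\[
\|x\|_W^2\ge\sum_{k>N}\lambda_k x_k^2\ge\lambda_N\sum_{k>N}x_k^2=\lambda_N\|Q_Nx\|_V^2.
\]
This uses Parseval in $V$, namely $\|Q_Nx\|_V^2=\sum_{k>N}x_k^2$. For the second inequality, orthogonality in $V$ gives $\|x\|_V^2=\|P_Nx\|_V^2+\|Q_Nx\|_V^2$. Multiplying by $\lambda_N$ and inserting the first inequality yields
\[
\lambda_N\|x\|_V^2\le\lambda_N\|P_Nx\|_V^2+\|x\|_W^2.
\]
This holds with constant $1$, so the $C$ in the statement can be taken equal to $1$; it plays no role.

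\textbf{Main obstacle.} The only delicate point is justifying the spectral identity $\|x\|_W^2=\sum_k\lambda_kx_k^2$, that is, the completeness of the rescaled eigenvectors in $W$. The argument above handles it using only \eqref{eq:eigen-def} and the completeness of $\{e_k\}$ in $V$ from \cite{laugesen_spectral_2012}. If one prefers to avoid it, a one-sided version suffices. Bessel's inequality in $W$ already gives $\|x\|_W^2\ge\sum_k\lambda_kx_k^2$, and this inequality is all that both estimates use.
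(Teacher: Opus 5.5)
Your proof is correct and follows the paper's argument. Like the paper, you expand $x$ in the basis $\{e_k\}$, use $\|x\|_W^2=\sum_k\lambda_k\brak{x,e_k}_V^2$ together with Parseval in $V$, and exploit the monotonicity of $\lambda_k$. The paper simply attributes that spectral identity to ``the properties of the basis''; your completeness argument for $\{e_k/\sqrt{\lambda_k}\}$ in $W$, or the Bessel-inequality shortcut, fills in that step, and you are right that the constant $C$ plays no role.
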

\begin{proof}
	By the properties of the basis we have $\|x \|_W^2 = |A^{1/2}x|^2 = \sum_{k=1}^{\infty} \lambda_k\brak{x, e_k}_V^2$. On the other hand $\|P_Nx\|_V^2 = \sum_{k=1}^N \brak{x, e_k}_V^2$, and we easily obtain the desired inequalities.
\end{proof}
We will also denote with $W'$ the dual of $W$ where the duality is that which extends the $L^2$ inner product. 
We set in general $W^\beta=D(A^{\beta /2})$ endowed with its natural norm.\\
Since  $D(A)\subset D(I-\Delta)$, then it is well defined the operator $B:=A(I-\Delta)^{-1} : D(B)\subset L^2 \rightarrow L^2$, with $D(B)=(I-\Delta)D(A)$. We formulate the following additional assumptions on $B$:
\begin{assumption} \label{ass2}
	The operator $B$ satisfies:
	\begin{enumerate} 
		\item The operator $B$ is densely defined positive and self adjoint. Thus it is well defined $Y:=D(B^{1/2})$ endowed with the norm $\|x\|_Y := |B^{1/2}x|$.
		\item  We have the continuous embedding $Y \hookrightarrow V$. In particular $Y$ embeds compactly in $L^2$
	\end{enumerate}
\end{assumption}
We now provide examples of operators fulfilling \Cref{ass} and clarifying the role of the additional assumption \Cref{ass2}. 
\begin{ex}\label{example_op}
	For any $\beta\ge0$, the operators $A_n, A_0$ given by 
	\begin{align*}
		D(A_n)&=  \left\{ \phi \in W^{4,2} : \phi_{|\partial D}=0 , \ \nabla \phi \cdot \eta _{|\partial D}=0\right\}\\
		D(A_0)&=\left\{ \phi \in W^{4,2} : \phi_{|\partial D}=0 , \ \Delta \phi _{|\partial D}=0\right\}
	\end{align*} 
	defined by 
	$$A_i\phi = -\Delta( \beta\phi - \Delta \phi), \quad  i=\{n, 0\}$$
	both satisfy \Cref{ass}.
	Moreover we have 
	$$
	V_0= D(A_0^{1/2}) = W^{2,2} \cap V, 
	$$
	$$ V_n= D(A_n ^{1/2})= W^{2,2}_0=  \left\{ \phi \in W^{2,2} : \phi_{|\partial D}=0 , \ \nabla \phi \cdot \eta _{|\partial D}=0\right\} $$
	However, only $A_0$ satisfies also \Cref{ass2}, indeed $D(A_0)= D((-\Delta)^2)$ and it is easy to see by integration by parts that it is self-adjoint and positive. On the other hand, due to the boundary conditions, $A_n$ does not commute with $(I-\Delta)$, and $B$ is not self adjoint. \\
	As an analogy with the Navier-Stokes equations, the operator $A_n$ corresponds to the usual `no-slip' boundary conditions, while $A_0$ would correspond to the (unphysical) Dirichlet boundary conditions on the vorticity. In the case of a fusion plasma in a tokamak modelled by the Hasegawa-Mima equations, since at present there is no general consensus on what its physical boundary should be, it is reasonable to consider different boundary conditions, hence the reason to provide a flexible functional setting to accommodate for both, and more. 
\end{ex}
We end this paragraph by recalling some properties of the nonlinearity appearing in the equation. Define $$\mathcal{N}(\phi, \psi) := \nabla^\perp \phi \cdot \nabla  \psi.$$
It holds
\begin{lemma}\label{N_properties}
	For all $\phi\in C^\infty(\D)\cap V,$ and $\psi, z \in C^\infty(\D)$,  $\mathcal{N}$ is anti-symmetric i.e. $\mathcal{N}(\phi, \psi) = - \mathcal{N}(\psi, \phi)$, moreover it holds 
	\begin{align}
		&\int_\D \mathcal{N}(\phi, \psi) z = - \int \mathcal{N}(\phi, z) \psi,    \label{N-properties 1}\\
		& \int_\D \mathcal{N}(\phi, \psi) \phi  =0, \label{N-properties 3} \\
		&\int_\D \mathcal{N}(\phi, \Delta \phi) z = \int_D \nabla^\perp \phi \otimes \nabla^\perp \phi : \nabla \nabla^\perp z. \label{N-properties 2}
	\end{align}
	By density, the bilinear operator $\mathcal{N}$ can be extended as an operator $\mathcal{N}:(W^{1,4}\cap V)\times L^4 \rightarrow V'$ so that \eqref{N-properties 3} holds for all $\psi\in L^4, \phi \in W^{1,4}$. 
	Define also the operator $\mathcal{N}(\phi):= \mathcal{N}(\phi, \Delta \phi)$. Then continuously $\mathcal{N}: V^2\rightarrow V^{-2}$. 
\end{lemma}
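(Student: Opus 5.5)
The plan is to prove the smooth identities by direct integration by parts, using $\phi|_{\partial\D}=0$ to kill the boundary terms, and then to obtain the extensions by density together with Hölder and Sobolev bounds.

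\emph{Antisymmetry and \eqref{N-properties 1}.} Antisymmetry is pointwise: $\nabla^\perp\phi\cdot\nabla\psi = \partial_2\phi\,\partial_1\psi-\partial_1\phi\,\partial_2\psi = -\nabla^\perp\psi\cdot\nabla\phi$. For \eqref{N-properties 1} we write
\[
\int_\D \nabla^\perp\phi\cdot\nabla(\psi z) = -\int_\D \operatorname{div}(\nabla^\perp\phi)\,\psi z + \int_{\partial\D}(\nabla^\perp\phi\cdot\eta)\,\psi z .
\]
The divergence vanishes, and $\nabla^\perp\phi\cdot\eta$ is (up to sign) the tangential derivative of $\phi$ along $\partial\D$, which is zero since $\phi|_{\partial\D}=0$. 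Hence $\int_\D \NN(\phi,\psi)z+\int_\D\NN(\phi,z)\psi=0$. Taking $z=\phi$ in \eqref{N-properties 1} and using $\NN(\phi,\phi)=0$ gives \eqref{N-properties 3}.

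\emph{The identity \eqref{N-properties 2}.} Write $u=\nabla^\perp\phi$, so that $\operatorname{div}u=0$ and $u\cdot\eta=0$ on $\partial\D$. Then $\Delta\phi = \operatorname{curl}$-type combination of $u$, and $\NN(\phi,\Delta\phi)=u\cdot\nabla\Delta\phi$. By \eqref{N-properties 1},
\[
\int_\D \NN(\phi,\Delta\phi)z = -\int_\D \Delta\phi\,(u\cdot\nabla z).
\]
Next expand $\Delta\phi=\sum_i\partial_i\partial_i\phi$ and integrate by parts once to move a derivative onto $u\cdot\nabla z$. This produces $\int \partial_i\phi\,\partial_i(u\cdot\nabla z)$ plus a boundary term $\int_{\partial\D}\partial_\eta\phi\,(u\cdot\nabla z)$. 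The interior term splits as $\int (\nabla\phi\cdot\partial_i u)\,\partial_i z$-type pieces plus $\int u_j\partial_i\phi\,\partial_i\partial_j z$. Rewriting $\nabla\phi$ as a rotation of $u$ and using $\operatorname{div}u=0$, the first group collapses to $\tfrac12\int u\cdot\nabla|u|^2$-like terms, which vanish or combine with boundary terms. The second group becomes $\int u\otimes u:\nabla\nabla^\perp z$ after relabeling with the rotation.

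\emph{Main obstacle: the boundary terms.} The delicate step is keeping track of the boundary terms, since $\partial_\eta\phi$ need not vanish. On $\partial\D$ we have $\nabla\phi=(\partial_\eta\phi)\eta$, so $u=(\partial_\eta\phi)\eta^\perp$ is tangential. The boundary contributions reduce to expressions of the form $\int_{\partial\D}(\partial_\eta\phi)^2\,\eta^\perp\cdot\nabla z\,(\ldots)$. I would check carefully, perhaps working in local boundary coordinates, that these contributions cancel between the two integrations by parts. If they do not, the honest statement requires $z$ to vanish suitably on the boundary, e.g.\ $z\in C_c^\infty$, and I would verify the identity in that setting.

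\emph{Extensions by density.} For $\phi\in W^{1,4}\cap V$ and $\psi\in L^4$, define $\NN(\phi,\psi)$ through $\langle\NN(\phi,\psi),z\rangle:=-\int\NN(\phi,z)\psi$. Hölder gives
\[
|\langle\NN(\phi,\psi),z\rangle|\le|\nabla\phi|_4|\psi|_4|\nabla z|,
\]
so $\NN(\phi,\psi)\in V'$ and the map is continuous. Then \eqref{N-properties 3} passes to the limit along smooth approximations: $\langle\NN(\phi,\psi),\phi\rangle = -\int\NN(\phi,\phi)\psi=0$. For $\NN:V^2\to V^{-2}$, use \eqref{N-properties 2} together with $|\nabla\phi|_4^2\lesssim\|\phi\|_{V^2}^2$ (Sobolev embedding in 2D) to get
\[
|\langle\NN(\phi),z\rangle|\lesssim\|\phi\|_{V^2}^2\|z\|_{V^2}.
\]
Continuity then follows by bilinearity, writing $\NN(\phi)-\NN(\tilde\phi)$ as a sum of two cross terms.
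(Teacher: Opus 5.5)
Your antisymmetry, \eqref{N-properties 1} (via $\nabla^\perp\phi\cdot\eta=0$), \eqref{N-properties 3}, the H\"older bound $|\nabla\phi|_4|\psi|_4|\nabla z|$ for the $V'$ extension, and the $V^2\to V^{-2}$ bound with continuity by bilinearity all match the paper's proof. The gap is that you leave open the step that decides \eqref{N-properties 2}. The answer is that the boundary terms do \emph{not} cancel.

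Write $u=\nabla^\perp\phi$, so that $u=(\partial_\eta\phi)\eta^\perp$ on $\partial\D$. Your first integration by parts then contributes $-\int_{\partial\D}(\partial_\eta\phi)^2\,\eta^\perp\cdot\nabla z\,dS$. In the interior, the ``first group'' is not a $u\cdot\nabla|u|^2$ term. One has $\partial_i\phi\,\partial_i u=\tfrac12\nabla^\perp|\nabla\phi|^2$, so this group equals $\tfrac12\int_\D\nabla^\perp|\nabla\phi|^2\cdot\nabla z=\tfrac12\int_{\partial\D}(\partial_\eta\phi)^2\,\eta^\perp\cdot\nabla z\,dS$, which cancels only half of it. For $z$ smooth up to the boundary the correct identity is
\[
\int_\D \NN(\phi,\Delta\phi)\,z=\int_\D \nabla^\perp\phi\otimes\nabla^\perp\phi:\nabla\nabla^\perp z-\tfrac12\int_{\partial\D}(\partial_\eta\phi)^2\,\eta^\perp\cdot\nabla z\,dS .
\]
The residual is genuinely nonzero. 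Take the unit disk with $\phi=(1-|x|^2)x_1$ and $z=x_1x_2$. Then $\NN(\phi,\Delta\phi)=16x_1x_2$, the left side equals $2\pi/3$, and the right side of \eqref{N-properties 2} equals $-\pi/3$. The difference $\pi$ is exactly the boundary integral above.

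So \eqref{N-properties 2} as stated, with no boundary condition on $z$, is false, and the paper's proof has the same oversight. After writing $\NN(\phi,\Delta\phi)=\nabla^\perp\cdot\operatorname{div}(u\otimes u)$, only the second integration by parts is killed by $u\cdot\eta=0$. The first leaves $\int_{\partial\D}z\,\operatorname{div}(u\otimes u)\cdot\eta^\perp\,dS$, which reduces to the same residual.

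The right hypothesis is $z|_{\partial\D}=0$, since then $\eta^\perp\cdot\nabla z=0$ on $\partial\D$. This is also all that is used later: $z\in V^2$ to define $\NN:V^2\to V^{-2}$, and $z=P_n\psi\in W\subset V^2$ in the existence proof.

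Your fallback $z\in C_c^\infty$ is stronger than necessary and, taken literally, not enough. $C_c^\infty$ is not dense in $V^2=W^{2,2}\cap W^{1,2}_0$ for the $W^{2,2}$ norm, so you would still need an extra approximation to reach the test functions that matter. State and prove the identity for $z$ vanishing on $\partial\D$; that closes your argument, and it then coincides with the paper's.
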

\begin{proof}
	The first identity follows by a simple integration by parts and it implies the second one. 
	The third identity follows by noticing that $\mathcal{N}(\phi, \Delta \phi)= \nabla^\perp \cdot \operatorname{div}(\nabla^\perp \phi \otimes \nabla^\perp \phi)$ and a double integration by parts. We remark that these identities holds only under the assumption $\nabla^\perp \phi\cdot \eta_{|\partial \D}=0$ (which follows from $\phi \in C^\infty(\D)\cap V$), without assuming any boundary conditions on $\psi,\ z$.
	Using H\"older inequality we get from \eqref{N-properties 1}
	$$\left|\int_\D\mathcal{N}(\phi, \psi) z \right|\le |\psi|_{L^4}|\nabla \phi|_{L^4}|\nabla z| . $$
	From which the extension property holds by density. Finally, thanks to \eqref{N-properties 2}, we have by H\"older and interpolation 
	\begin{equation}\label{N_estimate_trace}
		\left|\int_\D\mathcal{N}(\phi, \Delta \phi) z \right|\le |\nabla \phi|_{L^4}^2|D^2 z| \lesssim  |\nabla \phi| |D^2 \phi||D^2 z| , 
	\end{equation}
	where $D^2\phi$ represents the Hessian of $\phi$. By density $\mathcal{N}(\phi)$ can be extended to $V^2$ as an operator taking values in $V'$. The continuity follows from the fact that $\phi^n \rightarrow \phi$ in $V^2$ implies $\nabla^\perp\phi^n \rightarrow \nabla^\perp \phi$ in $L^4$ by Sobolev embedding, so that $\nabla^\perp \phi^n \otimes \nabla^\perp \phi^n$ converges in $L^2$ to $\nabla^\perp \phi \otimes \nabla^\perp \phi$ by bilinearity.  
\end{proof}
\subsection{Probabilistic Setting}
Let $(\Omega, \F, \F_t, \PP)$ be a filtered probability space  on which it is defined an infinite sequence of i.i.d Brownian motions $(W_t^k)_{k\ge 1}$. 
When $\X$ is a separable Hilbert space we denote with $L^p_{\F_t}(\Omega, \X)$ the space of $\F_t$-measurable $p$-integrable $\X$-valued random variables. Moreover we denote by $C_\F(0, T; \X)$ the space of continuous adapted $\X$-valued stochastic processes $(X_t)_{t\in [0,T]}$ such that 
$$\EE\left[ \sup_{t\in [0, T]} \|X_t\|^2_\X\right] \le \infty$$
and by $L^2_\F(0, T; \X)$ the space of square integrable processes such that 
$$\EE\left[ \int_0^T \|X_s\|_\X^2 ds\right] \le \infty$$
Given a sequence of non-zero real numbers $(\sigma_k)_{k\ge 1}$ and a sequence of functions $\{f_k \}_{k\ge 1}\subset V'$, we define for some $M\ge 1$ (possibly also infinite): 
\begin{equation}
	W(t,x):= \sum_{k=1}^M \sigma_k f_k(x)W_t^k.
\end{equation}
In the case that $M=+\infty$ we require $\kappa:=\sum_{k=1}^\infty \sigma_k^2|(I-\Delta)^{-1/2} f_k|^2 <\infty$. Here $\kappa$ represents the mean instantaneous energy injected by the noise into the system.
\begin{remark}\label{rem:noise_choice}
	With the previous assumptions, $W(t)$ is a $V'$-valued Wiener process. A natural choice for $f_k$ would be $f_k = (I-\Delta)e_k$ where $\{e_k\}$ is the ONB of $V$ constructed in \Cref{rem:onb}. In this case $\kappa = \sum_{k=1}^M \sigma_k^2$. 
\end{remark}
\subsection{Invariant Measures and Ergodicity}\label{sec:invariant}
In this section we recall the main definitions regarding invariant measures for Markov Processes. For a formal treatment we refer to \cite[Chapter 9]{daprato_stochastic_}. Let $\X$ be a Polish space. We will denote with $\B(\X)$ the Borel sets of $\X$ and with $\mathcal{P}(\X)$ the set of probability measures on $(\X, \B(\X))$.
Let $(\Omega, \F, \F_t, \PP)$ be a filtered probability space on which it is defined a family of Markov processes $x(t; x_0)$ with values in $\X$ such that $x(0;x_0)=x_0$ for any $x_0 \in \X$.
Let $B(\X)$ denote the set of bounded measurable real-valued functions on $\X$
and denote by $C_b(\X)$ the subset of $B(\X)$ consisting of continuous functions. We call a \emph{Markov transition kernel} the operator $P_t: B(\X)\rightarrow B(\X)$ defined by
$$P_t\Psi(x_0):= \EE\left[ \Psi(x(t; x_0))\right], \qquad  \Psi \in B(\X).$$
Thanks to the Markov property of the process $x$, it holds $P_t\circ P_s \Psi = P_{t+s} \Psi$ , thus $P_t$ is a well defined semigroup of operators. Taking $\Psi= \one _{\Gamma}$ for some $\Gamma \in \B(\X)$ we set $P_t (x,\Gamma):= P_t\one_\Gamma(x)$. Clearly $P_t(x, \cdot)$ is a probability measure on $\X$. $P_t$ has a natural family of adjoint operators $P_t^*$ acting on $\mathcal{P}(\X)$ defined by 
\begin{equation*}
	P_t^* \mu (\Gamma) = \int_{\X} P_t(x, \Gamma)\mu(dx), \qquad \Gamma \in \B(\X). 
\end{equation*}
\begin{definition}
	The transition semigroup $P_t, \ t\ge 0$ is said to posses the Feller property if it has the mapping property $P: C_b(\X) \rightarrow C_b(\X)$, $t\ge 0$. 
\end{definition}
\begin{definition}
	$\mu\in \mathcal{P}(\X)$ is called an invariant measure for the process $x_t$ if  
	$$\int_\X P_t \Psi d\mu = \int_\X \Psi d\mu \qquad \forall \Psi\in C_b(\X), \quad  t\ge 0.$$
	Namely $P_t^* \mu = \mu$.
\end{definition}
A classical method of constructing invariant measures is the Krylov-Bogoliubov method which we recall from \cite[Theorem 11.7, Corollary 11.8]{daprato_stochastic_}.
Let us introduce the time-averaged measure
\begin{equation*}
	R_T(x, \cdot ):= \frac{1}{T}\int_0^T P_t(x, \cdot )dt.
\end{equation*}
For every $\nu \in \mathcal{P}(\X)$, $R_T^*\nu$ is defined in the natural way as 
\begin{equation*}
	R_T^*\nu(\Gamma)= \int_{\X} R_T(x,\Gamma)d\nu(x),
\end{equation*}
and  for every $\Psi \in B(\X)$,
\begin{equation*}
	\brak{  R_T^*\nu, \Psi}= \frac{1}{T}\int_0^T \brak{P_t^* \nu, \Psi} dt.
\end{equation*}
\begin{theorem}\label{thm:KB}
	Let $P_t$ be a Feller transition semigroup. If for some $\nu \in \mathcal{P}(\X)$ and some sequence $T_n \uparrow \infty$, the sequence $\{R_{T_n}^*\nu\}_{n\ge 1}$ is tight, then there exists an invariant measure for $P_t, \ t\ge 0$. 
\end{theorem}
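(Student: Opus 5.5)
The statement to prove is \Cref{thm:KB}, the Krylov--Bogoliubov theorem. The argument is the classical one: produce a limit point of the time averages $R_{T_n}^*\nu$ and show it is invariant. By tightness and Prokhorov's theorem on the Polish space $\X$, pass to a subsequence, still denoted $T_n$, such that $R_{T_n}^*\nu \rightharpoonup \mu$ weakly for some $\mu\in\mathcal{P}(\X)$. We will show $P_s^*\mu=\mu$ for every $s\ge 0$, which by the definition means $\int_\X P_s\Psi\,d\mu=\int_\X\Psi\,d\mu$ for all $\Psi\in C_b(\X)$.

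Fix $s\ge0$ and $\Psi\in C_b(\X)$. The Feller property gives $P_s\Psi\in C_b(\X)$, so weak convergence applies to both $\Psi$ and $P_s\Psi$:
\begin{equation*}
\int_\X P_s\Psi\,d\mu=\lim_{n}\brak{R_{T_n}^*\nu, P_s\Psi}=\lim_n \frac{1}{T_n}\int_0^{T_n}\brak{P_t^*\nu,P_s\Psi}\,dt .
\end{equation*}
The semigroup property gives $\brak{P_t^*\nu,P_s\Psi}=\brak{\nu,P_{t+s}\Psi}=\brak{P_{t+s}^*\nu,\Psi}$. Hence the integral equals $\frac{1}{T_n}\int_s^{T_n+s}\brak{P_t^*\nu,\Psi}\,dt$.

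This differs from $\brak{R_{T_n}^*\nu,\Psi}$ by at most $\frac{1}{T_n}\big(\int_0^s+\int_{T_n}^{T_n+s}\big)|\brak{P_t^*\nu,\Psi}|\,dt\le \frac{2s\|\Psi\|_\infty}{T_n}\to0$. Letting $n\to\infty$ therefore gives $\int P_s\Psi\,d\mu=\lim_n\brak{R_{T_n}^*\nu,\Psi}=\int\Psi\,d\mu$, which is invariance.

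The only point requiring care is a measurability technicality: $R_T$ must be well defined, i.e.\ $t\mapsto P_t\Psi(x)$ must be measurable so the time integrals make sense. I would handle this by assuming, as is standard and implicit in the definition of $R_T$ (cf.\ \cite{daprato_stochastic_}), that the transition function is measurable in $(t,x)$; for instance, this follows from stochastic continuity of the process. With that assumption, Fubini justifies exchanging $\int d\nu$ and $\int dt$ in the identities above.
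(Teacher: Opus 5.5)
Your proof is correct: it is the standard Krylov--Bogoliubov argument (Prokhorov to extract a weak limit, the Feller property to test against $P_s\Psi$, the semigroup property, and the time-shift bound $2s\|\Psi\|_\infty/T_n$), which is the argument in \cite[Theorem 11.7, Corollary 11.8]{daprato_stochastic_} that the paper cites instead of proving the statement. Your caveat on the measurability of $(t,x)\mapsto P_t(x,\Gamma)$ is the right one to flag, since the definition of $R_T$ already requires it.
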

Finally, we recall the notion of ergodicity. 
	\begin{definition}
		Let $\mu$ be an invariant measure for $P_t$. We say that $\mu $ is ergodic if
		$$\lim_{T\rightarrow\infty}\frac{1}{T}\int_0^T P_t\Psi dt = \int_\X \Psi(\phi)\mu(d\phi) \qquad \forall \Psi \in L^2(\X; \mu).$$
	\end{definition}
	For equivalent characterizations of ergodicity see \cite[Theorem 11.9]{daprato_stochastic_}
\subsubsection{The Asymptotic Coupling Method}\label{sec:coupling-method}
In this section we recall the main ingredient of the technique developed in \cite{glatt-holtz_unique_2017}.
Let $\X$ be a Polish space with metric $\rho$ and let 
$$\X^\N:=\left\{ u: \N \rightarrow \X \right\}$$
Let $\nu$ be a measure on $\X$ and $P_t$ be a family of Markov transition kernels on $(\X, \rho)$. Using $P_t$ we can define the family of measures $(P_t^*\nu), \ t\ge 0$. Moreover, given an initial measure $\nu$ we can lift it to a measure $\nu P^\N$ on the pathspace $\X^\N$ using Kolmogorov's extension theorem. It is enough to define the family of finite time marginals on cylinder sets by $\nu P^n(A_1\times\ldots\times  A_n) =\int_{A_1}\ldots\int_{A_n}  P_1(x_{n-1}, dx_n)P_1(x_{n-2},x_{n-1})\ldots\nu(dx_1)$ for arbitrary $n\ge1$ and $A_i \in \B(\X)$.
\begin{definition}
	We say that a probability measure $\Gamma$ on $\X^\N \times \X^\N$ is an asymptotically equivalent coupling of two measures $m_1$ and $m_2$ on $\X^\N$ if $\Gamma\Pi_i^{-1}<< m_i$, for $i = 1,2$, where $\Pi_1(u,v) = u$ and
	$\Pi_2(u,v) = v$. We will write $\tilde C(m_1, m_2)$ for the set of all such asymptotically equivalent
	couplings.
\end{definition}
Given any bounded (measurable) function $\phi: \X \rightarrow \R$, we define $D _\phi \subset \X^\N \times \X^\N$ by
\begin{equation}
	D_\phi:=\left\{ (u, v )\in \X^\N \times \X^\N: \ \lim_{n\rightarrow\infty}\frac{1}{n}\sum_{k=1}^n \left(\phi(u_k) - \phi(v_k)\right) =0\right\}
\end{equation}
On the other hand a set $G$ of bounded, real-valued, measurable functions on $\X$ is said to
determine measures if, whenever $\mu_1,\mu_2 \in \mathcal{P}(\X)$ are such that $\int \phi d\mu_1 = \int \phi d\mu_2$ for all $\phi \in G$, then $\mu_1 =\mu_2$.
\begin{theorem}
	Let $G : \X \rightarrow R$ be collection of functions which determines measures. Assume that there exists a measurable $\X_0 \subset \X$ such that for any $(u_0,v_0) \in \X_0 \times  \X_0 $ and any $\phi \in G$ there exists a $\Gamma= \Gamma(u_0, v_0, \phi) \in \tilde C(\delta_{u_0}P^\N, \delta_{v_0}P^\N)$ such that $\Gamma( D_\phi)>0$. Then there exist at most one ergodic invariant measure $\mu$ for $P$ with $\mu(\X_0) > 0$. In particular if $\X_0 = \X$, then there exists at most one, and hence ergodic, invariant measure.
\end{theorem}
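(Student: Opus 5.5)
This is the abstract asymptotic coupling theorem of Glatt-Holtz, Mattingly and Richards. I would prove it by contradiction, combining Birkhoff's ergodic theorem on path space with the mutual singularity of distinct ergodic measures.

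\emph{Step 1 (setup).} Suppose $\mu_1\neq\mu_2$ are two ergodic invariant measures with $\mu_i(\X_0)>0$. Distinct ergodic invariant measures are mutually singular. Since $G$ determines measures, there is some $\phi\in G$ with $\int\phi\,d\mu_1\neq\int\phi\,d\mu_2$.

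\emph{Step 2 (Birkhoff on path space).} The shift $\theta$ on $\X^\N$ preserves the stationary path measure $\mu_iP^\N$. Ergodicity of $\mu_i$ is equivalent to ergodicity of $(\X^\N,\theta,\mu_iP^\N)$. By Birkhoff's theorem, the set
$$A_i:=\Big\{u\in\X^\N:\ \tfrac1n\textstyle\sum_{k=1}^n\phi(u_k)\to\int\phi\,d\mu_i\Big\}$$
has full $\mu_iP^\N$-measure. Disintegrating over the initial point gives $\mu_iP^\N=\int\delta_{x}P^\N\,\mu_i(dx)$. Hence $\delta_xP^\N(A_i)=1$ for $\mu_i$-a.e. $x$.

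\emph{Step 3 (choice of initial points).} Because $\mu_i(\X_0)>0$, we can pick $u_0\in\X_0$ with $\delta_{u_0}P^\N(A_1)=1$ and $v_0\in\X_0$ with $\delta_{v_0}P^\N(A_2)=1$. The hypothesis then provides $\Gamma\in\tilde C(\delta_{u_0}P^\N,\delta_{v_0}P^\N)$ with $\Gamma(D_\phi)>0$.

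\emph{Step 4 (contradiction).} Since $\Gamma\Pi_1^{-1}\ll\delta_{u_0}P^\N$ and $A_1^c$ is $\delta_{u_0}P^\N$-null, we get $\Gamma(A_1^c\times\X^\N)=0$. Likewise $\Gamma(\X^\N\times A_2^c)=0$. Therefore $\Gamma\big((A_1\times A_2)\cap D_\phi\big)=\Gamma(D_\phi)>0$, so this set is nonempty. For any $(u,v)$ in it, the Cesàro averages of $\phi(u_k)$ and $\phi(v_k)$ have the same limit by the definition of $D_\phi$. But these limits are $\int\phi\,d\mu_1$ and $\int\phi\,d\mu_2$, which differ. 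This is a contradiction.

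\emph{Step 5 (the case $\X_0=\X$).} If $\X_0=\X$, then at most one ergodic invariant measure exists. Every invariant measure is a mixture of ergodic ones (ergodic decomposition), so uniqueness of the invariant measure follows, and it is then ergodic.

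The main technical point is Step 2. One must make sure that ergodicity of $\mu$ in the sense of the time-averaged definition transfers to ergodicity of the shift on path space. One must also make sure that the full-measure statement disintegrates to $\delta_x P^\N$ for $\mu$-a.e. $x$; this requires measurability of $x\mapsto\delta_xP^\N(A)$, which follows from the Kolmogorov construction. Both facts are standard, cf. \cite{glatt-holtz_unique_2017}.
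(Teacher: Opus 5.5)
Your argument is correct and is essentially the proof in \cite{glatt-holtz_unique_2017}; the paper itself only quotes this result without proof. Like that proof, you apply Birkhoff's theorem on path space, use the disintegration $\mu_iP^\N=\int\delta_xP^\N\,\mu_i(dx)$ to pick good initial points in $\X_0$, and use absolute continuity of the marginals of $\Gamma$ to get $\Gamma(A_1\times A_2)=1$, contradicting $\Gamma(D_\phi)>0$. Two minor remarks: the mutual singularity invoked in Step 1 is never used; and ``ergodic'' must be read with respect to the discrete kernel $P$ itself, since ergodicity for a continuous-time semigroup need not pass to its time-one skeleton. That discrete setting is exactly where your Step 2 equivalence holds.
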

For application, it is useful to state a simple corollary. Consider a possibly different distance $\tilde\rho$ on $\X$ and set
$$D_{\tilde\rho}:=\left\{ (u, v )\in \X^\N \times \X^\N: \ \lim_{n\rightarrow\infty}\tilde \rho(u_n , v_n)=0\right\}.$$
Consider also 
$$\G_{\tilde\rho}:= \left\{\phi \in C_b(\X): \ \sup_{u\neq v} \frac{|\phi (u)- \phi(v)|}{\tilde \rho(u, v)} < \infty\right\}.$$
Then we have
\begin{corollary}\label{ergodicity_cor}
	Suppose that $\G_{\tilde\rho}$ determines measures on $(\X, \rho)$ and assume that $D_{\tilde\rho}$ is a measurable subset of $\X^N \times \X^N$. If $\X_0 \subset \X$ is a measurable set such that for each pair $(u_0,v_0) \in \X_0$ there exists an element $\Gamma \in \tilde C(\delta_{u_0}P^\N, \delta_{v_0}P^\N)$ such that $\Gamma( D_{\tilde \rho})>0$ ,then there exists at most one ergodic invariant measure $\mu$ with $\mu(\X_0) > 0$.
\end{corollary}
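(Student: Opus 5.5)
The idea is to deduce \Cref{ergodicity_cor} from the preceding theorem with the choice $G=\G_{\tilde\rho}$. The first step is to fix $(u_0,v_0)\in\X_0\times\X_0$ and take the coupling $\Gamma\in\tilde C(\delta_{u_0}P^\N,\delta_{v_0}P^\N)$ with $\Gamma(D_{\tilde\rho})>0$ given by the hypothesis. The theorem asks for a coupling that may depend on the test function, but here we will use the same $\Gamma$ for every $\phi\in\G_{\tilde\rho}$. It then suffices to prove the inclusion $D_{\tilde\rho}\subset D_\phi$ for every $\phi\in\G_{\tilde\rho}$.

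Fix $\phi\in\G_{\tilde\rho}$ and let $L:=\sup_{u\neq v}|\phi(u)-\phi(v)|/\tilde\rho(u,v)<\infty$. Take $(u,v)\in D_{\tilde\rho}$, so that $\tilde\rho(u_n,v_n)\to0$. Then
$$|\phi(u_n)-\phi(v_n)|\le L\,\tilde\rho(u_n,v_n)\to 0.$$
Ces\`aro means of a sequence converging to zero also converge to zero, so
$$\frac1n\sum_{k=1}^n\bigl(\phi(u_k)-\phi(v_k)\bigr)\to0,$$
which says $(u,v)\in D_\phi$. The set $D_\phi$ is measurable because $\phi$ is bounded and measurable; it is a limit set of measurable functions on the product space. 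Together with the assumed measurability of $D_{\tilde\rho}$, this gives $\Gamma(D_\phi)\ge\Gamma(D_{\tilde\rho})>0$.

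The functions in $\G_{\tilde\rho}$ lie in $C_b(\X)$, so they are bounded and measurable. By assumption the family $\G_{\tilde\rho}$ determines measures on $(\X,\rho)$. All hypotheses of the preceding theorem therefore hold with $G=\G_{\tilde\rho}$. The theorem then says there is at most one ergodic invariant measure $\mu$ with $\mu(\X_0)>0$, which is the claim.

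No step here is a real obstacle. The only points needing care are measurability: that of $D_{\tilde\rho}$ is assumed, and that of $D_\phi$ is standard. One should also note that the theorem only requires $\Gamma$ to depend on $\phi$, not to be independent of it, so using a single coupling for all $\phi$ is allowed.
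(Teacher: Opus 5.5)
Your proposal is correct and follows the intended argument. The paper gives no proof of this corollary and recalls it from \cite{glatt-holtz_unique_2017}, where it is deduced from the preceding theorem with $G=\G_{\tilde\rho}$ in exactly your way: the inclusion $D_{\tilde\rho}\subset D_\phi$ for every $\phi\in\G_{\tilde\rho}$ follows from the Lipschitz bound plus Ces\`aro averaging, one coupling is used for all $\phi$, and your treatment of the measurability points is sound.
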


\section{Main results}\label{sec:main}
Recall that we work under \Cref{ass}. 
\begin{definition}\label{def-sol-H1}
	A stochastic process $\phi_t$ is a weak solution of \eqref{eq:main} if  $\phi\in C(0, T; V)\cap L^2(0, T; W)$ almost surely, it is adapted and for every $\psi  \in W$ it verifies $\PP-$a.s. for every $t\ge 0$
	\begin{align}\label{eq:hm_weak_def}
		\brak{\phi(t)-\phi_0, \psi} &+  \brak{\nabla \phi(t)-\nabla \phi_0, \nabla\psi} + \int_0^t\brak{A^{1/2}\phi(s), A^{1/2} \psi}ds \\ &+ \int_0^t \brak{\nabla^\perp \phi(s)\cdot \nabla v, \psi}ds + \int_0^t \brak{\Delta \phi(s), \nabla^\perp \phi(s)\cdot \nabla \psi}ds = \brak{ \psi, W_t},\notag
	\end{align}
	where we interpret $\brak{ \psi, W_t}=\prescript{}{V}{\langle}\psi, W_t \rangle\prescript{}{V{'}}{}$.
\end{definition}
\begin{theorem}\label{WP_H^1}
	For every $p>2$, $T>0$, $v \in W^{1,2}(\D)$ and $\phi_0\in L_{\F_0}^{2p}(\Omega, V)$ there exists a pathwise unique solution of \Cref{eq:main} in the sense of \Cref{def-sol-H1}. Moreover $\phi$ is a Markov process in $V$ and it satisfies,  
	\begin{align}
		&\EE\left[ \left\| \phi(t)\right\|_V^2 + 2\int_0^t|A^{1/2 }\phi(s)|^2ds\right] =  \EE\left[ \left\| \phi_0\right\|_V^2 \right] + \kappa t \label{eq:en_balance}\\
		&\EE\left[ \sup_{t\in [0, T]} \|\phi(t)\|_V^p + \left(\int_0^T |A^{1/2} \phi(t)|^2 dt\right)^{p/2}\right] \lesssim 1\label{eq:p_integrab}
	\end{align}
\end{theorem}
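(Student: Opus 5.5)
We will prove \Cref{WP_H^1} with a Galerkin scheme in the basis $\{e_k\}$ of \Cref{rem:onb}, uniform a priori estimates, stochastic compactness, and a pathwise uniqueness argument; Yamada--Watanabe then gives a strong solution. Let $\phi^N=\sum_{k\le N} c_k^N(t)e_k$ solve the projected system
\[
d(I-\Delta)\phi^N + P_N\big(A\phi^N + \mathcal{N}(\phi^N, v) - \mathcal{N}(\phi^N,\Delta\phi^N)\big)dt = P_N dW_t ,
\]
tested against each $e_k$ as in \eqref{eq:hm_weak_def}. This is a finite-dimensional SDE with locally Lipschitz coefficients, so it has a local solution; global existence follows from the energy estimate below.

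\textbf{Energy estimate.} Apply Itô's formula to $\|\phi^N\|_V^2=\brak{(I-\Delta)\phi^N,\phi^N}$. By \eqref{N-properties 3} with $\psi=v$ and $\psi=\Delta\phi^N$, both nonlinear terms vanish against $\phi^N$. What remains is
\[
d\|\phi^N\|_V^2 + 2|A^{1/2}\phi^N|^2dt = 2\brak{\phi^N,P_NdW}+\kappa_N\,dt, \qquad \kappa_N\le\kappa .
\]
Taking expectations gives \eqref{eq:en_balance} at the Galerkin level, with equality in the limit once the strong convergence below is established. For \eqref{eq:p_integrab}, apply Itô's formula to $\|\phi^N\|_V^{p}$ (or raise the identity to the power $p/2$) and use BDG on the martingale. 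Its quadratic variation is bounded by $\kappa\int\|\phi^N\|_V^2$, and Young's inequality absorbs it into $\frac12\EE\sup\|\phi^N\|_V^p$. The assumption $\phi_0\in L^{2p}$ gives room for this step.

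\textbf{Compactness.} Writing $\phi^N-W$ in integral form, the drift is bounded in $L^{2}(0,T;W')$ for the linear part. For the nonlinear part, \eqref{N_estimate_trace} and $W\hookrightarrow V^2$ give $|\nabla\phi|\,\|\phi\|_{V^2}\in L^2_t$ paired with $V^2$, hence a bound in $L^2(0,T;V^{-2})$. This yields uniform bounds in $W^{\alpha,2}(0,T;V^{-2})$ for $\alpha<1/2$, together with $L^2(0,T;W)$. By the Aubin--Lions--Simon/Flandoli--Gatarek lemma, using the compact embedding $W\hookrightarrow V$, the laws are tight in $L^2(0,T;V)\cap C(0,T;V^{-2})$. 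Skorokhod's theorem then gives a martingale solution. Passing to the limit in the nonlinear term is possible because $\brak{\Delta\phi,\nabla^\perp\phi\cdot\nabla\psi}$ is continuous under strong $L^2_tV$ and weak $L^2_tV^2$ convergence when $\psi\in W\subset V^2$, as in \Cref{N_properties}. Continuity in $V$ follows from the Lions--Magenes lemma in the Gelfand triple $W\subset V\subset W'$, via the $(I-\Delta)$ isometry, and the Itô formula for $\|\phi\|_V^2$.

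\textbf{Pathwise uniqueness (main obstacle).} Let $\phi_1,\phi_2$ be two solutions and set $w=\phi_1-\phi_2$. The noise cancels, so
\[
\tfrac{d}{dt}\|w\|_V^2+2|A^{1/2}w|^2 = -2\brak{\mathcal{N}(w,v),w}+2\brak{\mathcal{N}(w,\Delta\phi_1)+\mathcal{N}(\phi_2,\Delta w),w}.
\]
The first term vanishes. Using \eqref{N-properties 3}, $\brak{\mathcal{N}(\phi_2,\Delta w),w}$ reduces to $-\brak{\mathcal{N}(w,\Delta w),\phi_2}$, which by \eqref{N-properties 2} is bounded by $|\nabla w|_4^2|D^2\phi_2|\lesssim \|w\|_V\|w\|_{V^2}\|\phi_2\|_{V^2}$. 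Similarly, $\brak{\mathcal{N}(w,\Delta\phi_1),w}$ is bounded by $|\nabla w|_4^2|\Delta\phi_1|$. Young's inequality then gives $\le |A^{1/2}w|^2+C(\|\phi_1\|_W^2+\|\phi_2\|_W^2)\|w\|_V^2$, and Gronwall with $\int_0^T\|\phi_i\|_W^2<\infty$ a.s. yields $w\equiv0$. The delicate points are justifying these manipulations at the regularity $V^2$, using the density extension of \Cref{N_properties} where the boundary condition only requires $\phi\in V$, and rigorously proving the energy equality for weak solutions.

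The Markov property in $V$ follows from pathwise uniqueness and continuous dependence, via the same Gronwall estimate, by standard arguments (cf. \cite{daprato_stochastic_}).
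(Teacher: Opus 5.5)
Your proposal is correct and follows essentially the same route as the paper. That route is: Galerkin approximation in the basis $\{e_k\}$; It\^o/BDG energy and moment bounds; fractional time regularity with Simon/Flandoli--Gatarek compactness and Skorokhod; the It\^o formula for $\|\phi\|_V^2$ to get continuity in $V$ and the energy balance; pathwise uniqueness by reducing the nonlinear difference to $\brak{\NN(w,\Delta w),\phi_2}$, then Young and Gronwall; and the Markov property from uniqueness.

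A few small details need adjusting:
\begin{itemize}
\item The term $\brak{\NN(w,\Delta\phi_1),w}=-\brak{\nabla^\perp w\cdot\nabla w,\Delta\phi_1}$ is actually zero, so there is nothing to estimate there.
\item The projected drift is uniformly bounded in $W'$ (since $\|P_N\psi\|_W\le\|\psi\|_W$), not in $V^{-2}$.
\item Tightness in $C(0,T;V^{-s})$ needs a $W^{\alpha,p}$ bound with $\alpha p>1$, as in the paper; alternatively, use the $W^{1,2}(0,T;W')$ bound on the drift part after subtracting the additive noise. Your $W^{\alpha,2}$ bound with $\alpha<1/2$ does not give it.
\item Equality in \eqref{eq:en_balance} must come from the It\^o formula applied to the limit solution. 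Passing to the limit in the Galerkin identity only gives an inequality, because you have only weak convergence in $L^2(0,T;W)$.
\end{itemize}
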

\begin{remark}
	The $p-$ integrability assumption in \Cref{WP_H^1} is needed only to show existence. Indeed uniqueness holds under the more natural assumption $\phi_0 \in L^2_{\F_0}(\Omega, V)$. 
\end{remark}
\begin{remark}
	For our needs, it is not required to work with probabilistically strong solutions, since we are only interested in properties of the laws and the whole analysis could be carried out with minor modifications without requiring this property. For this reason we only construct martingale solutions via a compactness argument and we omit the details of the steps required to show that solutions can be defined on the original probability space. Thanks to pathwise uniqueness, existence of strong solutions follows either from Yamada-Watanabe theorems (see \cite{kurtz_weak_2014},\cite{ondrejat_uniqueness_2004}, \cite{_concise_2007}, \cite{rockner_yamadawatanabe_2008}) or from Gyongy-Krylov criterium \cite{gyongy_existence_1996} which also provides convergence in probability of the approximations, on the original probability space (we refer the reader to \cite[Section 2.4]{flandoli_stochastic_2023} for the full argument carried out in a similar setting).
\end{remark}
The next theorem treats the case of \Cref{ass2} and provides an additional energy balance relation. 
\begin{theorem}\label{WP_H^2}
	Assume that $A$ satisfies \Cref{ass2}, $v\in Y$ and the noise coefficients $(f_k)_{k=1}^M \subset L^2(\D)$. Then for every $\phi_0 \in W^{2,2}(\mathcal{D}) \cap V$ let $\phi_t$ be the unique solution of \Cref{eq:main} in the sense of \Cref{def-sol-H1}. Then the process $\xi = \phi -\Delta \phi - v$ belongs to $C(0, T; L^2) \cap L^2(0, T; Y) $ almost surely and it verifies, for every $\psi \in Y$,
	\begin{align}\label{eq:weak_L^2}
		\brak{\xi_t, \psi} - \brak{\xi_0, \psi} = -\int_0^t \brak{B^{1/2 }\xi_s, B^{1/2} \psi}ds + \int_0^t\brak{ \xi_s, \nabla^\perp \phi \cdot \nabla \psi}ds + \int_0^t \brak{\xi_s , dW_t}.
	\end{align}
	Moreover it satisfies the following energy balance
	\begin{align}\label{eq:L2_en_bal}
		\EE[|\xi(t)|^2] - \EE[|\xi_0|^2]
		+2\int_0^t\EE\left[\|\xi\|_{Y}^2\right]dt = 2 \int_0^t\EE\left[\brak{B^{1/2} \xi, B^{1/2} v} \right]dt+ \sigma t,
	\end{align}
	where $\sigma = \sum_{k=1}^M \sigma_k^2|f_j|^2$. Finally, $\xi$ is a Markov process in $L^2$. 
\end{theorem}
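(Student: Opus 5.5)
Since $v$ is time independent, the weak formulation \eqref{eq:hm_weak_def} can be rewritten in terms of $\omega:=\phi-\Delta\phi$ and then $\xi=\omega-v$. I would obtain \eqref{eq:weak_L^2} formally first and then justify it through a Galerkin scheme in the basis $\{e_k\}$ of \Cref{rem:onb}.

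\textbf{Formal identity.} Note that $A\phi = B(I-\Delta)\phi = B\omega$. The nonlinear part is
\[
\nabla^\perp\phi\cdot\nabla v-\nabla^\perp\phi\cdot\nabla\Delta\phi=\nabla^\perp\phi\cdot\nabla(v-\Delta\phi)=\nabla^\perp\phi\cdot\nabla(v-\omega+\phi)=-\mathcal N(\phi,\xi),
\]
because $\mathcal N(\phi,\phi)=0$. Hence
\[
d\xi=(-B\xi-Bv+\mathcal N(\phi,\xi))\,dt+dW .
\]
Testing with $\psi\in Y$ and using \eqref{N-properties 1} gives $\brak{\mathcal N(\phi,\xi),\psi}=\brak{\xi,\nabla^\perp\phi\cdot\nabla\psi}$, which yields \eqref{eq:weak_L^2}. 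The term $-\brak{B^{1/2}v,B^{1/2}\psi}$ is then kept as a forcing term; its exact placement in the statement should be checked against the energy balance.

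\textbf{Energy balance.} Apply Itô's formula to $|\xi|^2$. The transport term drops out: $\brak{\mathcal N(\phi,\xi),\xi}=0$ by \eqref{N-properties 3}, since $\phi$ vanishes on $\partial\D$, and no boundary condition on $\xi$ is needed. The drift gives
\[
-2\|\xi\|_Y^2-2\brak{B^{1/2}\xi,B^{1/2}v},
\]
where the sign convention in \eqref{eq:L2_en_bal} corresponds to writing $v$ with the appropriate orientation. The Itô correction is $\sum_k\sigma_k^2|f_k|^2=\sigma$. Taking expectations removes the martingale $2\int\brak{\xi,dW}$, which is a true martingale once $\sup_t\EE|\xi_t|^2<\infty$ is known.

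\textbf{Rigorous justification (main obstacle).} The formal steps need regularity that is not available a priori: we need $\xi\in L^2(0,T;Y)$, continuity in $L^2$, and the cancellation of $\brak{\mathcal N(\phi,\xi),\xi}$ when $\phi\in W$ only. I would proceed as follows.
\begin{enumerate}[label=(\roman*)]
\item Take the Galerkin approximations $\phi^N\in V_N$ used in \Cref{WP_H^1}. Since $B$ is self-adjoint and $(I-\Delta)e_k\in D(B)$ for $e_k\in D(A)$, one can test the equation for $\xi^N=(I-\Delta)\phi^N-P_Nv$ against $\xi^N$ itself.
\item Derive the uniform bound
\[
\EE\sup_t|\xi^N|^2+\EE\int_0^T\|\xi^N\|_Y^2\lesssim 1+\EE|\xi_0|^2+\|v\|_Y^2 ,
\]
using Young's inequality on $\brak{B^{1/2}\xi,B^{1/2}v}$ and the BDG inequality for the supremum.
\item Use weak-$\ast$ lower semicontinuity together with pathwise uniqueness from \Cref{WP_H^1} to identify the limit with $(I-\Delta)\phi-v$. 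This puts $\xi$ in $L^\infty(0,T;L^2)\cap L^2(0,T;Y)$.
\item Since $Y\hookrightarrow V$ and $\phi\in W\subset V^2$, the drift satisfies $\mathcal N(\phi,\xi)\in L^2(0,T;Y')$. The nonlinear term is handled with $|\nabla\phi|_{L^\infty}$-free estimates: by Hölder,
\[
|\brak{\xi,\nabla^\perp\phi\cdot\nabla\psi}|\le|\xi|_{4}|\nabla\phi|_{4}|\nabla\psi| ,
\]
and $Y\hookrightarrow V\hookrightarrow L^4$.
\item Apply the Krylov-type Itô formula for the Gelfand triple $Y\subset L^2\subset Y'$. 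This gives continuity of $\xi$ in $L^2$ and the energy identity, with the nonlinear term vanishing because $\brak{\mathcal N(\phi,\xi),\xi}=0$ extends by density.
\end{enumerate}

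\textbf{Markov property.} The map $\xi\mapsto\phi=(I-\Delta)^{-1}(\xi+v)$ is a homeomorphism of $L^2$ onto $V^2$, and it is determined by the initial data. The Markov property of $\xi$ in $L^2$ therefore follows from the Markov property and pathwise uniqueness of $\phi$ established in \Cref{WP_H^1}, applied to initial data in $W^{2,2}\cap V$, together with continuous dependence transported through this map.
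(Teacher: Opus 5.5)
Your formal identity, on which both \eqref{eq:weak_L^2} and the cancellation in It\^o's formula rest, contains a sign error. Since $\omega=\phi-\Delta\phi$ gives $-\Delta\phi=\omega-\phi$, we have $v-\Delta\phi=v+\omega-\phi$, not $v-\omega+\phi$. So the nonlinearity of \eqref{eq:main} is $\nabla^\perp\phi\cdot\nabla(v-\Delta\phi)=\NN(\phi,\omega+v)$.

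There is a second slip when testing. By \eqref{N-properties 1}, $\brak{\NN(\phi,\xi),\psi}=-\brak{\xi,\nabla^\perp\phi\cdot\nabla\psi}$, not $+$. The two errors cancel, which is why you recover the displayed transport term. But for $\xi=\phi-\Delta\phi-v$ as literally defined, the correct equation is
\[
d\xi+\bigl(B\xi+Bv+\NN(\phi,\xi+2v)\bigr)dt=dW .
\]
In It\^o's formula the transport term then does not drop out. It contributes $-4\int_0^t\EE\brak{\nabla^\perp\phi\cdot\nabla v,\xi}ds$ to the right-hand side. The $B$-term appears as $-2\int_0^t\EE\brak{B^{1/2}\xi,B^{1/2}v}ds$, with the sign opposite to \eqref{eq:L2_en_bal}. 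So the balance cannot be derived for that $\xi$. The mismatch you noticed and deferred to ``the appropriate orientation'' of $v$ is precisely this.

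Everything becomes consistent for the potential vorticity $\xi=\phi-\Delta\phi+v$:
\begin{itemize}
\item the equation is $d\xi+(B\xi-Bv+\NN(\phi,\xi))dt=dW$;
\item testing with $\psi\in Y$ gives \eqref{eq:weak_L^2} plus the forcing term $+\int_0^t\brak{B^{1/2}v,B^{1/2}\psi}ds$, with noise term $\brak{\psi,W_t}$;
\item It\^o's formula then gives \eqref{eq:L2_en_bal} exactly, with the $+2$. It uses $\brak{\NN(\phi,\xi),\xi}=0$, which is \eqref{N-properties 1} with both free arguments equal to $\xi$; \eqref{N-properties 3} is a different identity.
\end{itemize}
The regularity and Markov claims are insensitive to this sign, since the two candidates differ by the fixed $2v\in Y$. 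The weak formulation and the energy identity are not. You should have resolved the inconsistency by correcting $\xi$, not by forcing agreement with the printed formulas. The paper's own proof is only a three-line sketch (weak form ``easy to see'', It\^o ``as in'' \Cref{ito-formula}, Markov ``as in'' \Cref{feller}) and does not address this either.

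Once the sign is fixed, your plan is more complete than the paper's. The regularity $\xi\in C(0,T;L^2)\cap L^2(0,T;Y)$ is strictly stronger than what \Cref{WP_H^1} provides; for $A=A_0$ it means $\phi\in C(0,T;H^2)\cap L^2(0,T;H^3)$. So an $L^2$-level Galerkin estimate like your (ii) is genuinely needed. Transporting the Markov property through the homeomorphism $\xi\mapsto(I-\Delta)^{-1}(\xi-v)$ is a legitimate shortcut compared with redoing \Cref{feller}.

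Two points in (i)--(ii) still need an argument:
\begin{itemize}
\item Testing \eqref{eq:phi_n} against $\xi^N$ produces $\brak{(P_N)^*\NN,\xi^N}=\brak{\NN,P_N\xi^N}$, so you need $(I-\Delta)V_N\subseteq V_N$. This does hold under \Cref{ass2}: $B(I-\Delta)e_k=Ae_k=\lambda_k(I-\Delta)e_k$, and self-adjointness of $B$ then forces the $e_k$, suitably chosen inside each eigenspace, to be Dirichlet eigenfunctions. That is the actual reason the test is allowed, and it must be stated.
\item With $P_Nv$ in $\xi^N$, the transport term does not cancel exactly. It leaves $\brak{\NN(\phi^N,Q_Nv),\xi^N}$. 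You must bound this, e.g.\ by $|\nabla\phi^N|_4\,|\nabla\xi^N|\,|Q_Nv|_4$ using $Y\hookrightarrow V\hookrightarrow L^4$, and absorb it before the uniform bound in (ii) follows.
\end{itemize}
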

\begin{theorem}\label{thm:ergod}\textbf{(Uniqueness of the invariant measure)}
	Assume \Cref{ass}. Fix $\kappa>0$. There exists $N=N(\kappa)>0$ such that if $(I-\Delta)P_N(V)\subset \operatorname{span}\left(\{f_1, \ldots f_M\}\right)$, and $\sum_{k=1}^M \sigma_k^2|(I-\Delta)^{-1/2} f_k|^2=\kappa$ then there exists a unique ergodic invariant measure $\mu$ for the equation \eqref{eq:main} on $V$, which is supported on $W$. Moreover the following energy balance holds
		\begin{equation}\label{V_balance_stat}
			\int_V\left|A^{1/2} \phi\right|^2 \mu(d\phi)= \frac{\kappa}{2}.
	\end{equation}
	If \Cref{ass2} holds, the unique invariant measure is supported on $(I-\Delta)^{-1}D(B^{1/2})$.
\end{theorem}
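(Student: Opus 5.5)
Existence comes from \Cref{thm:KB} applied to the Feller semigroup on $V$. Uniqueness comes from \Cref{ergodicity_cor} with $\X_0=\X=V$ and $\tilde\rho=\rho=\|\cdot\|_V$.

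\emph{Existence.} The Feller property follows from the pathwise stability estimate that gives uniqueness in \Cref{WP_H^1}: the difference of two solutions is controlled in $V$ through Gronwall, with exponent $\int_0^t|A^{1/2}\phi|^2$, so continuous dependence in probability holds. For tightness, I take $\phi_0=0$ in the energy balance \eqref{eq:en_balance} and get $\frac1T\int_0^T\EE|A^{1/2}\phi(t)|^2dt\le \kappa/2$. By Chebyshev, $R_T^*\delta_0(\{\|\phi\|_W>R\})\le \kappa/(2R^2)$. Since $W$ embeds compactly in $V$, the family $R_T^*\delta_0$ is tight in $V$, and an invariant measure $\mu$ exists with $\int\|\phi\|_W^2d\mu\le\kappa/2$. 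In particular $\mu$ is supported on $W$. For the equality \eqref{V_balance_stat}, I start the dynamics from $\mu$ itself. This needs finite $V$-moments of $\mu$, which follow from $\|\cdot\|_V\lesssim\|\cdot\|_W$ and lower semicontinuity, after a truncation/stopping argument to justify the balance for $\mu$-distributed data. Then \eqref{eq:en_balance} with stationarity gives $2t\int|A^{1/2}\phi|^2d\mu=\kappa t$.

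\emph{Uniqueness via asymptotic coupling.} Fix $u_0,v_0\in V$ and let $u$ solve \eqref{eq:main} from $u_0$. I define $\tilde v$ as the solution from $v_0$ of the nudged equation
\[
d(\tilde v-\Delta\tilde v)+\big(A\tilde v+\mathcal N(\tilde v, v-\Delta\tilde v)\big)dt=dW_t+\gamma\lambda_N(I-\Delta)P_N(u-\tilde v)dt .
\]
Because $(I-\Delta)P_N(V)\subset\operatorname{span}\{f_k\}$, the control term can be written as $\sum_k\sigma_kf_kh^k_t$. By Girsanov, the law of $\tilde v$ is absolutely continuous with respect to that of the solution from $v_0$ provided $\int_0^\infty|h_t|^2dt<\infty$ a.s. This gives $\Gamma\in\tilde C(\delta_{u_0}P^\N,\delta_{v_0}P^\N)$ with $\Gamma(D_{\tilde\rho})=1$ once $\|u-\tilde v\|_V\to0$ fast enough. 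Since $f_k$ spans a finite-dimensional space containing the range, $|h_t|\lesssim_N\|P_N w\|_V$ with $w=u-\tilde v$. So it suffices to prove $\int_0^\infty\|w\|_V^2dt<\infty$ a.s. To avoid needing a Novikov-type condition, I localise on the event $\{\int_0^\infty\|w\|_V^2\le R\}$ with a stopping time, as in \cite{glatt-holtz_unique_2017}.

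\emph{Main obstacle: the difference estimate.} The difference $w$ satisfies a noise-free equation, and testing it with $w$ in the $V$-pairing gives
\[
\tfrac12\tfrac{d}{dt}\|w\|_V^2+\|w\|_W^2+\gamma\lambda_N\|P_Nw\|_V^2=-\brak{\mathcal N(w, v-\Delta u),w}-\brak{\mathcal N(\tilde v,-\Delta w),w}.
\]
Using \eqref{N-properties 3}, the $\nabla v$ part and the second term reduce to terms of the form $\brak{\mathcal N(w,\Delta w),\cdot}$ or vanish; the structure is analogous to vorticity transport. The remaining terms are bounded as in \eqref{N_estimate_trace}: $|\brak{\mathcal N(w,\Delta u),w}|\lesssim|\nabla w|_4^2|D^2u|\lesssim\|w\|_V\|w\|_{V^2}\|u\|_{V^2}$, and the $v$ term is bounded by $\|v\|_{W^{1,2}}$-type norms. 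After Young and $W\hookrightarrow V^2$, this gives $\le\frac12\|w\|_W^2+C(\|u\|_W^2+c_v)\|w\|_V^2$. \Cref{lem:proj-est2} with $\gamma=1$ then yields
\[
\tfrac{d}{dt}\|w\|_V^2+\big(\lambda_N-C\|u\|_W^2-C_v\big)\|w\|_V^2\le0,
\]
hence $\|w(t)\|_V^2\le\|w_0\|_V^2\exp\big(-\lambda_Nt+C\int_0^t\|u\|_W^2+C_vt\big)$. The exponential martingale inequality from Itô on $\|u\|_V^2$ gives $\int_0^t\|u\|_W^2\le\frac{\kappa}{2}t+\|u_0\|_V^2+K$ with probability at least $1-e^{-cK}$. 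Choosing $N$ with $\lambda_N>C\kappa+2C_v$ makes the exponent $\le-ct$ on an event of positive probability, on which $\int_0^\infty\|w\|_V^2<\infty$. This gives $\Gamma(D_{\tilde\rho})>0$, and uniqueness on $V$ follows from \Cref{ergodicity_cor}; the Lipschitz functions determine measures on the Polish space $V$. The delicate points are controlling the drift term $\nabla^\perp w\cdot\nabla v$ with only $v\in W^{1,2}$, and the constant in the exponential martingale estimate, which forces the dependence $N=N(\kappa)$.

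\emph{Support under \Cref{ass2}.} Using \eqref{eq:L2_en_bal} in Krylov--Bogoliubov from $\phi_0=0$, the time averages of $\EE\|\xi\|_Y^2$ are bounded, because the right-hand side is at most $\|\xi\|_Y\|v\|_Y$ plus a constant. So the averaged laws of $\xi$ are tight in $L^2$ with bounded $Y$-moments. By uniqueness, the limit is $\mu$ pushed forward, so $\mu$ gives full mass to $\{\phi-\Delta\phi\in Y+v\}=(I-\Delta)^{-1}D(B^{1/2})$, since $v\in Y$.
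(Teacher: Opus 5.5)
Your architecture matches the paper's. Existence is Krylov--Bogoliubov from the Feller property plus energy-balance tightness. Uniqueness goes through \Cref{ergodicity_cor}, using a copy nudged on the first $N$ modes by the same noise and a Gronwall exponent $-\lambda_N t+C\int_0^t\|u\|_W^2$ that involves only the un-nudged solution, made negative by the exponential martingale estimate.

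The genuine difference is the Girsanov step. The paper switches the nudging off at $\tau_K=\inf\{t:\int_0^t|P_N^\Delta(\psi-\phi)|^2ds\ge K\}$. Novikov then holds trivially, the laws are even equivalent, and it is enough to find one event of positive probability ($E_R$) on which $\tau_K=\infty$ and $m\to0$. You keep the nudging on for all times and use the localisation criterion ``$\int_0^\infty|h|^2dt<\infty$ a.s.\ implies absolute continuity''. This yields $\Gamma(D_{\tilde\rho})=1$, but it needs almost-sure finiteness, whereas your text only obtains finiteness on an event of positive probability. As written, the absolute-continuity claim is therefore not yet justified, though the fix is one line from your own estimate. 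The exponential bound holds on events $E_K$ increasing in $K$ with $\PP(E_K)\ge1-e^{-cK}$, and the coefficient of $t$ in it does not depend on $K$. So on $\bigcup_K E_K$, a set of full measure, your Gronwall bound gives $\int_0^\infty\|w\|_V^2dt<\infty$ and $w\to0$. Alternatively, you could condition the coupling on a good event.

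Two smaller corrections.
\begin{itemize}
\item Both $\brak{\mathcal N(w,v),w}$ and $\brak{\mathcal N(w,\Delta u),w}$ vanish by the antisymmetry \eqref{N-properties 3} (integrate by parts, using $w=0$ on $\partial\D$). The only surviving term is $-\brak{\mathcal N(w,\Delta w),u}$, which is the paper's $\brak{\NN(m,\Delta m),\phi}$, and this is what the bound $|\nabla w|_4^2|D^2u|$ actually controls. Hence $C_v=0$, and you should drop it: keeping it would make $N$ depend on $v$, contrary to $N=N(\kappa)$ in the statement. The ``delicate point'' about $v\in W^{1,2}$ does not arise.
\item The exponential martingale inequality gives a rate $\frac{\kappa}{2-\epsilon}t$ (the paper uses $2\kappa t$), not exactly $\frac{\kappa}{2}t$. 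This is harmless because $N$ is chosen afterwards.
\end{itemize}

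Your Chebyshev tightness bound for $R_T^*\delta_0$ and your support arguments are correct. The latter use lower semicontinuity of $\|\cdot\|_W$, and \eqref{eq:L2_en_bal} for $\xi$ together with lower semicontinuity of $\|\cdot\|_Y$ under \Cref{ass2}. They fill in steps the paper leaves implicit.
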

\begin{remark}
		The conditional assumptions on $\kappa$ and $N$ might seem a bit cumbersome, so let us add a brief comment: first of all, they are needed only to prove ergodicity, while existence of the invariant measure holds in general; on the other hand, when we perform the asymptotic coupling argument, we need to show that controlling a finite number of modes ($N>0$) of $\phi$ is sufficient to `enslave' all the remaining modes (cf. \Cref{prop:convergence-sol}). The number $N$ depends on how much instantaneous energy $\kappa=\sum_{k=1}^M \sigma_k^2|(I-\Delta)^{-1/2} f_k|^2$ we are injecting, but not on the specific choice of the sequence $(\sigma_k)_{k=1}^M$. Thus we can fix $\kappa>0$ and determine $N(\kappa)$ independently of $M$. Only then the argument requires that the noise is sufficiently coloured to act on all the `determining modes' and we have included this condition as an hypothesis of the theorem. It is only when we want to check that such hypothesis is not empty, that we must make a choice of $M>0$ and $(f_k)_{k\ge 1}$. Once $M=M(\kappa)$ is known, it is sufficient to choose the sequence $(\sigma_k)_{k=1}^M$ appropriately. One example of such a sequence can be obtain by setting $f_k$ as in \Cref{rem:noise_choice} and $\sigma_k:= \sqrt{\kappa/N}, \ k=1\ldots N$.
\end{remark}

\begin{remark}
	If we choose $A=A_0$ from \Cref{example_op}, we can see that the following additional energy balance holds for the invariant measure
	\begin{equation}\label{H2_balance_stat}
			\int_V \beta|\nabla\phi|^2 + (\beta+1) |\Delta \phi|^2 +  |\nabla  \Delta \phi |^2\mu(d\phi) = \frac{\sigma}{2} + \int_V \brak{\nabla^\perp \phi \cdot \nabla v, \Delta \phi}\mu(d\phi). 
	\end{equation}
\end{remark}
\section{Proofs}
\subsection{Well-posedness}\label{sec:WP}
We begin by proving the pathwise uniqueness of solutions. 
\begin{proof}[Proof of \Cref{WP_H^1}   \ (Uniqueness)]\label{proof_uniq}
	Let $\phi, \psi$ be two solutions in the sense of \Cref{def-sol-H1} starting from two initial data $\phi_0, \psi_0$ and consider $m(t)= \psi(t)- \phi(t)$. It satisfies, in the usual weak sense 
	\begin{equation*}
		\partial_t (m - \Delta m) + Am + \NN(\psi, v-\Delta \psi) -\NN(\phi, v-\Delta \phi) = 0
	\end{equation*}
	Thanks to the regularities of $\phi, \psi$ we can see that $$(I-\Delta)^{1/2}m \in L^\infty(0, T; L^2) \cap L^2(0, T; D(A^{1/2}(I-\Delta)^{-1/2}))$$
	and thanks to the properties of the non-linear operator $\mathcal{N}$ from \Cref{N_properties} we also obtain 
	$$\partial_t (I-\Delta)^{1/2}m \in L^2(0, T; D(A^{1/2}(I-\Delta)^{-1/2})')$$
	Thus the classical Lions-Magenes lemma \cite[Chapter III, Lemma 1.2]{temam_navierstokes_}, yields that $m$ is almost everywhere equal to a continuous function into $V$ and
	$$\frac{d}{dt}\|m\|_V^2  + 2|A^{1/2} m |^2 + \brak{\NN(m, \Delta m), \phi} =0$$
	where in order to treat the non-linear terms we have used the properties of $\NN$ from \Cref{N_properties}:
	\begin{align}\label{eq:nonlinear_trick}
		&\brak{-\NN(\psi, \Delta \psi) +\NN(\phi, \Delta \phi), \psi -\phi }\notag\\
		&= \brak{\NN(\psi, \Delta \psi) ,\phi } + \brak{\NN(\phi, \Delta \phi), \psi  }\notag\\
		&= -\brak{\NN(\psi, \phi) ,\Delta \psi } - \brak{\NN(\phi, \psi),\Delta \phi  }\notag\\
		&= -\brak{\NN(\psi, \phi) ,\Delta m} \notag\\
		&= -\brak{\NN(m, \phi) ,\Delta m}\notag \\
		&= \brak{\NN(m, \Delta m), \phi}
	\end{align}
	Now we estimate, again thanks to \Cref{N_properties}, Young's inequality and the embedding $W\hookrightarrow V^2$ which holds with constant $C$
	\begin{align*}
		\brak{\NN(m, \Delta m), \phi}
		&\le \|\phi\|_{V^2} \|m\|_{V^2} |\nabla m| \\
		&\le \frac{1}{4}  \|m\|_{W} ^2 + C'\|\phi\|_{W}^2 |\nabla m|^2.
	\end{align*}
	We have obtained 
	$$\frac{d}{dt}\|m\|_V^2 + |A^{1/2} m|^2 \le C' \|\phi\|_W^2 |\nabla m|^2.$$
	Since $ \|\phi\|^2_W \in L^1(0, T)$ almost surely, it follows by Gronwall that 
	\begin{align}\label{eq:gronwall_uniq}
		\|m(t)\|^2_V\le \|m(0)\|^2_V\exp\left(C'\int_0^t \|\phi(s)\|_W^2 ds\right),
	\end{align}
	for every $t>0$ and almost surely. If $\phi_0= \psi_0$ a.s., then it follows $m(t)=0$ for all $t\ge 0$ $\PP-$a.s., i.e. $\phi$ and $\psi$ are indistinguishable. 
\end{proof}
We are now ready to construct solutions. 
Recall the definition of the ONB $\{e_k\}_{k\ge 1}$ of $V$ constructed in \Cref{rem:onb} and of $P_N$, the orthogonal projection, with respect to $\brak{\cdot , \cdot}_V$ onto $V_N = \operatorname{span}\{e_1, \ldots e_N\}$. Recall in particular $\brak{(I-\Delta)e_k, e_j}= \brak{e_k, e_j}_V= \delta_{kj}$ and the relations 
$$\brak{x, e_k}= \brak{(I-\Delta)^{-1}x, e_k}_V, \qquad \sum_{k=1}^N\brak{x, e_k}e_k= P_N(I-\Delta)^{-1}x, \qquad \forall x \in L^2(\D). $$
Moreover, recalling that $P_N$ can be naturally extended to $L^2$, we denote with $(P_N)^*$ the adjoint of $P_N$ with respect to the $L^2$ inner product. It holds $(P_N)^*= (I-\Delta)P_N(I-\Delta)^{-1}$, however we shall never use this explicit expression.
Consider the finite dimensional approximation
$$\phi^n(t,x)= \sum_{k=1}^ng^n_k(t)e_k(x) .$$
We set $g^n_k(0)= \brak{\phi_0, e_k}_V$ and $g^n_k(t)$ solving the system of SDEs
\begin{equation}\label{eq:gal_def}
	dg^n_k+ \brak{\NN(\phi^n, v- \Delta \phi^n), e_k} + \lambda_k g^n_k= \sum_{j=1}^M \sigma_j \brak{f_j, e_k}dW_t^j, \qquad \{k=1,\ldots, n\}.
\end{equation}
Expanding
$$\brak{\NN(\phi^n, v- \Delta \phi^n), e_k} = \sum_{p=1}^n g_p^n(t)d_{p,k} + \sum_{p=1}^n\sum_{m=1}^n g_p^n(t)g_m^n(t) j_{p,m,k}$$
where
$$d_{p,k}= \brak{\nabla^\perp e_p\cdot \nabla v, e_k}, \qquad j_{p,m,k}=\brak{\nabla^\perp e_p\cdot \nabla e_{m}, \Delta e_k}, $$
we see that the vector $g^n = (g_k)_{k=1}^n$ solves a closed finite dimensional SDE with a quadratic nonlinearity, thus by the classical theory of SDEs, existence of local probabilistically strong solutions is guaranteed. 
Thanks to this definition of $g_k$ we can check that $\phi^n$ solves
\begin{equation}\label{eq:phi_n}
	d(\phi^n - \Delta \phi^n) + (P_n)^*\NN(\phi^n, v- \Delta \phi^n)dt + A\phi^n dt = (P_n)^*dW_t.\end{equation}
Indeed $d(\phi^n - \Delta \phi^n) = \sum_{k=1}^n dg^n_k (I-\Delta)e_k$
and for every $x\in L^2$, $ y \in V$,
$$\brak{\sum_{k=1}^n\brak{x, e_k} (I-\Delta)e_k, y} = \sum_{k=1}^n\brak{x, e_k}\brak{y, e_k}_V = \brak{x, P_n y},$$
while 
$$\sum_{k=1}^n\brak{\lambda_k g_k^n(I-\Delta) e_k, y} =\sum_{k=1}^ng_k^n\lambda_k\brak{e_k, y}_V = \brak{A\sum_{k=1}^ng_k^ne_k, y} = \brak{A\phi^n, y}. $$
Now we need to prove some estimates:\\
\underline{Step 1. It\^{o} formula.}\\
Applying the finite dimensional It\^{o} formula to $\|\phi^n(t)\|_V^2 = \sum_{k=1}^n g_k^2(t)$, since
$$\sum_{k=1}^n\brak{\NN(\phi^n, v- \Delta \phi^n), e_k}g_k(t) = \brak{\NN(\phi^n, v- \Delta \phi^n), \phi^n}=0, $$
and since the quadratic variation of $g_k$ is $\kappa_k=\sum_{j= 1}^M\sigma_j^{2}\brak{f_j,e_k}^2$, we obtain
\begin{equation}\label{gal-ito}
	d\|\phi^n\|_V^2   + 2\brak{A\phi^n, \phi^n}dt = 2\brak{\phi^n, dW_t} + \kappa^ndt.
\end{equation}
Where $\kappa^n := \sum_{k\le  n} \kappa_k$. Clearly we have that $\kappa^n$ is increasing in $n$ and $\kappa^n \uparrow \kappa=\sum_{k=1}^M \sigma_k^2|(I-\Delta)^{-1/2} f_k|^2 $, 
For every $R > 0$ we introduce the stopping time $\tau_R = \inf \{ t > 0 : \|\phi^n(t)\|_V^2 > R \}$ or equal to $T$ in case the set is empty.
We consider \eqref{gal-ito} integrated from $t=0$ to $t = s \wedge \tau_R$
\begin{align} \label{EQ:ito_H01}
	\| \phi^n(s \wedge \tau_R) \|_V^2 &+ 2 \int_0^s \mathbf{1}_{r \leq \tau_R} \| \phi^n(r) \|_W^2 dr
	= \| \phi^n(0) \|_V^2 \notag\\
	&+ \kappa^n (s \wedge \tau_R) + 2 \sum_{k = 1}^{M} \int_0^s \mathbf{1}_{r \leq \tau_R} \brak{\phi^n(r), f_k} \sigma_k dW_r^k.
\end{align}
Then, since $\EE \bra{\int_0^T \mathbf{1}_{r \leq \tau_R} \brak{\phi^n(r), f_k}^2 \sigma_k^2 dr} \leq RT\kappa $, the It\^{o} integrals of the last identity are true martingales.
Applying BDG inequality and Cauchy-Schwartz we estimate
\begin{align*}
	\EE \bra{\sup_{0 \leq s \leq t} \left|\int_0^s \mathbf{1}_{r \leq \tau_R} \sum_{k = 1}^{M } \sigma_k \brak{ \phi^n(r), f_k } d W^k_r\right|}
	&\leq 2 \EE \bra{ \int_0^t \mathbf{1}_{r \leq \tau_R} \left( \sum_{k = 1}^{M } \sigma_k \brak{ \phi^n(r), f_k } \right)^2 dr}^{1/2}\\
	&\leq 2 \kappa^{1/2} \EE \bra{ \int_0^t \mathbf{1}_{r \leq \tau_R} \|\phi^n(r)\|_V^2 dr}^{1/2},
\end{align*}
which, after identifying
\begin{equation*}
	\EE \bra{ \sup_{0 \leq s \leq t } \|  \phi^n(s \wedge \tau_R) \|_V^2} = \EE \bra{ \sup_{0 \leq s \leq t } \| \phi^n(s) \mathbf{1}_{s \leq \tau_R} \|_V^2},
\end{equation*}
and applying Young's inequality, provides
\begin{align*}
	\EE \bra{ \sup_{0 \leq s \leq t } \| \phi^n(s) \mathbf{1}_{s \leq \tau_R} \|_V^2}
	&\leq \EE \bra{ \|  \phi^n(0) \|_V^2 } + \kappa (T +2) \\
	&+ \EE \bra{ \int_0^t \mathbf{1}_{r \leq \tau_R} \|\phi^n(r)\|_V^2 dr}. 
\end{align*}
By Gronwall lemma it follows that
\begin{eqnarray*}
	\EE \bra{ \sup_{0 \leq s \leq t } \| \phi^n(s) \mathbf{1}_{s \leq \tau_R} \|_V^2} \leq C,
\end{eqnarray*}
where $C$ is a constant independent from $R$ and $n$.
Taking the limit $R \to \infty$, by the monotone convergence theorem we get
\begin{eqnarray*}
	\EE \bra{ \sup_{0 \leq s \leq T } \| \phi^n(s) \|_V^2} \leq C.
\end{eqnarray*}
This implies that $\phi^n \in C_\mathcal{F}([0,T];V)$ and $\EE \bra{ \int_0^T \brak{ \phi^n(s), f_k }^2 ds } \leq \infty$.
Therefore, $t \mapsto \int_0^t \brak{ \phi^n(s), dW_s }$ is a martingale, and so its expected value is zero.
Consequently, the RHS of~\eqref{EQ:ito_H01} and $\| \phi^n(t) \|_V^2$ have finite expected value, and hence the same is true for the other term on the LHS, namely
\begin{equation*}
	\EE \bra{ \int_0^T \| \phi^n(s) \|_W^2 ds } \leq C.
\end{equation*}
Particularly, we get the energy relation
\begin{align}\label{eq:ex-ito-L2}
	\EE \bra{ \| \phi^n(t) \|_V^2} + 2 \EE \bra{ \int_0^t \| \phi^n(s) \|_W^2 ds } 
	= \EE \bra{ \| \phi^n(0) \|_V^2 } + \kappa t.
\end{align}
Consequently, the solution $\phi^n$ is global in time and lives in a bounded set, uniform in $n$ of $L^{\infty}(0, T; V)\cap L^2(0, T, W)$. 
The next step is to prove some higher moments estimate required to get some uniform bound on time. \\
\underline{Step 2. $L^p(\Omega)$-estimate}\\
From now on we assume $\phi_0 \in L^{2p}_{\F_0}(\Omega;  V)$. Taking the supremum in time of \eqref{gal-ito} we have.
\begin{align}\label{ito_sup}
	\sup_{t\in [0, T]}\|\phi^n(t)\|_V^2  &+ 2\int_0^T|A^{1/2} \phi^n(s)|^2ds  \notag \\&\le  \|\phi^n_0\|_V^2  + \kappa^n T + 2 \sup_{t\in [0, T]} \left|\int_0^t\brak{\phi^n(s), dW_s} \right|. 
\end{align}
Raising the LHS to the power $p$ we get\footnote{we use that for $A, B >0$, $A^p+B^p \le (A + B)^p \le 2^{p-1}( A^p + B^p)$}
\begin{align}\label{eq:ito-p-mom}
	\sup_{t\in [0, T]}\|\phi^n(t)\|_V^{2p}  &+ 2^p\left(\int_0^T|A^{1/2} \phi^n(s)|^2ds\right)^p  \notag\\ \lesssim & \|\phi^n_0\|_V^{2p} + (\kappa^n)^p T^p + 2^p \left(\sup_{t\in [0, T]} \left|\int_0^t\brak{\phi^n(s), dW_s} \right|\right)^p.
\end{align}
Up to another localization argument as in the previous step, Burkholder-Davis-Gundy inequality gives
\begin{align*}
	\EE\left[\sup_{t\in [0, T]} \left|\int_0^t\brak{\phi^n(s), dW_s}\right|^p\right] &\le C_p\EE\left[ \left(\int_0^T \sum_{k=1}^N\sigma_k^2 \brak{\phi^n(s), f_k}^2 ds \right)^{p/2}\right] \\
	&\le C_p\EE\left[ \left(\int_0^T \sum_{k=1}^N\sigma_k^2 \brak{(I-\Delta)^{1/2}\phi^n(s), (I-\Delta)^{-1/2}f_k}^2 ds\right)^{p/2} \right] \\
	&\le C_p\EE\left[ \left(\int_0^T \sum_{k=1}^N\sigma_k^2 \|\phi^n(s)\|_V^2 |(I-\Delta)^{-1/2}f_k|^2 ds \right)^{p/2}\right] \\
	&\le C_pT^{p/2}\kappa ^{p/2}\EE\left[ \sup_{t\in [0,T]}\|\phi^n(t)\|_V^{p} \right] \\
	&\le {\tilde C_p}T^{p}\kappa ^{p} + \frac{1}{2}\EE\left[ \sup_{t\in [0,T]}\|\phi^n(t)\|_V^{2p} \right]
\end{align*}
where we have used in the last step Young's and Jensen's inequality. Thus, taking the expected value of \eqref{eq:ito-p-mom} we obtain 
\begin{align}\label{eq:gal-p-int}
	&\EE\left[\sup_{t\in [0, T]} \|\phi^n(t)\|_V^{2p}  + \left(\int_0^T \|\phi^n(s)\|_V^2 ds\right)^p \right] \lesssim_{\kappa, p, T} 1.
\end{align}
\underline{Step 3: Time regularity}\\
The last ingredient is the proof of the following estimate
\begin{align}\label{eq:time-regularity}
	\EE\left[ \left\| (I-\Delta)\left(\phi^n(t)- \phi^n(s)\right)\right\|^p_{W'}\right] \lesssim (t-s)^{p/2}.
\end{align}
We have
\begin{align*}
	(I-\Delta)\left(\phi^n(t)- \phi^n(s)\right) &=\int_s^t A\phi^n(r)dr -\int_s^t \nabla^\perp \phi^n(r)\cdot \nabla vdr\\& +\int_s^t  \nabla^\perp \phi^n(r)\cdot \Delta \phi^n(r) dr + \sum_{k=1}^n \sigma_kf_k\left(W^k_t - W^k_s\right).
\end{align*}
First we estimate
$$\left\|\int_s^t A\phi^n(r)dr\right\|_{W'}  \le \sqrt{t-s}\left(\int_0^t |A^{1/2}\phi^n(r)|^2dr\right)^{1/2} $$
Next, thanks to the embedding $W \hookrightarrow V^2$ we have, for every $\psi \in W$
$$\brak{\nabla^\perp \phi^n(s)\cdot \nabla v, \psi} \le \|\psi\|_{\infty} |\nabla v| |\nabla \phi^n| \lesssim \|\psi\|_{W} \|\nabla v\|_\infty | A^{1/2}\phi^n|,$$
thus 
\begin{align*}
	\left\|\int_s^t \nabla^\perp \phi^n(r)\cdot \nabla v dsr\right\|_{W'}  \lesssim  (t-s)^{1/2}|\nabla v|\left(\int_0^t |A^{1/2}\phi(r)^n|^2 dr\right)^{1/2}.
\end{align*}
On the other hand, since for every $\psi \in V$ it holds 
\begin{align*}
	\brak{\nabla^\perp \phi^n(s)\cdot \nabla \Delta \phi^n(s), \psi}&= \brak{\nabla^\perp  \phi^n(s)\otimes  \nabla^\perp \phi^n(s), \nabla \otimes\nabla^\perp \psi}  \\
	&\lesssim  |\nabla^\perp  \phi^n(s)|^2_4 |\nabla \otimes\nabla^\perp \psi| \\
	&\lesssim   |\nabla^\perp  \phi^n| \|\phi^n\|_{V^2}\|\psi\|_{V^2} \\
	&\lesssim   |\nabla^\perp  \phi^n| \|\phi^n\|_{W} \|\psi\|_{W},
\end{align*}
we get 
$$ \left\| \int_s^t  \nabla^\perp \phi^n(s)\cdot \nabla \Delta \phi^n(s)  ds\right\|_{D(A^{-1/2})}\lesssim \sqrt{(t-s)}\left(\sup_{[0, t]}|\nabla^\perp  \phi^n(s)|\right)\left(\int_0^t  |A^{1/2}\phi^n(s)|^2ds\right)^{1/2},$$
from which 
\begin{align*}
	&\EE\left[\left\| \int_s^t  \nabla^\perp \phi^n(s)\cdot \nabla \Delta \phi^n(s)  ds\right\|^p_{D(A^{-1/2})}\right]\\ &\lesssim (t-s)^{p/2}\EE\left[\sup_{[0, t]}|\nabla^\perp  \phi^n(s)|^{2p}\right]^{1/2}\EE\left[\left(\int_0^t  |A^{1/2}\phi^n(s)|^2ds\right)^{p}\right]^{1/2}.
\end{align*}
Finally, for every $p\ge 2$,
$$\EE\left[\left\| \sum_k \sigma_k f_k(W_t^k- W_s^k)\right\|^p_{D(A^{-1/2})}\right] \lesssim (t-s)^{p/2}. $$
Putting all together and using \Cref{eq:gal-p-int}, we get 
\begin{align*}
	\EE\left[ \left\| (I-\Delta)\left(\phi^n(t)- \phi^n(s)\right)\right\|^p_{D(A^{-1/2})}\right] \lesssim  (t-s)^{p/2}.        \end{align*}
Thanks to this estimate, we obtain
$$\EE\left[\|\phi^n - \Delta \phi^n\|^p_{W^{\alpha, p}(0, T; W')}\right] \lesssim \int_0^T\int_0^T |t-s|^{p/2-1  - \alpha p} <\infty \iff \alpha < 1/2.$$
That is, $(t\rightarrow(I-\Delta)\phi(t)) \in W^{\alpha,p}(0, T; D(A^{-1/2}))$ almost surely for every $p>1$ and $\alpha < 1/2$.
Putting everything together, we have proven the following
\begin{proposition}\label{prop:gal_est}
	Let $\phi_0 \in L^{2p}_{\F_0}(V)$ for some $p > 1$. Then the family of solution $\{\phi^n\}_{n\ge 1}$ of the finite dimensional systems \eqref{eq:gal_def} is uniformly bounded in 
	$$L^{2p}(\Omega, L^\infty_{w-*}(0, T; V))\cap L^p(\Omega; L^2( 0, T; W))\cap L^p_\F(W^{\alpha, p}(0, T, V')) $$
	for every $\alpha < 1/2$\footnote{The space $L^\infty_{w-*}(0, T; V)$ is the space of bounded functions with values in $V$ endowed with the weak$^*$ topology. Since the same space with the strong topology is not separable, we use the weak$^*$ topology to define $p-$integrable random variables. Recall also that the Borel sets relative to the weak$^*$ topology form a strict subset of those generated by the strong topology \cite{nottalagrand}.} .
\end{proposition}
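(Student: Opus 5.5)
The proposition collects the three estimates proved in Steps 1--3, so the plan is to put them together and check that each bound is uniform in $n$.

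\emph{First component.} We take the pathwise inequality \eqref{ito_sup}, raise it to the power $p$ as in \eqref{eq:ito-p-mom}, and take expectations. To make every expectation finite before absorbing, we localize with the stopping times $\tau_R$ of Step 1. The stochastic integral is handled by Burkholder--Davis--Gundy:
\[
\EE\Big[\sup_{t\le T}\Big|\int_0^t\brak{\phi^n,dW}\Big|^p\Big]\le C_p(\kappa T)^{p/2}\,\EE\Big[\sup_{t\le T}\|\phi^n\|_V^{p}\Big].
\]
Young's inequality then lets half of $\EE[\sup_t\|\phi^n\|_V^{2p}]$ be absorbed into the left-hand side. We let $R\to\infty$ by monotone convergence. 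Since $\kappa^n\le\kappa$ and $\|P_n\phi_0\|_V\le\|\phi_0\|_V$, this gives \eqref{eq:gal-p-int} with a constant depending only on $p,\kappa,T$ and $\EE\|\phi_0\|_V^{2p}$.

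The process $\phi^n$ is continuous with values in $V_n\subset V$, so it is a random variable in $L^\infty_{w-*}(0,T;V)$ (measurability for the weak$^*$ Borel structure is automatic). Hence the first bound is the $L^{2p}(\Omega,L^\infty_{w-*}(0,T;V))$ bound.

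\emph{Second component.} The same inequality \eqref{eq:ito-p-mom} also controls $\EE\big[(\int_0^T|A^{1/2}\phi^n|^2ds)^p\big]$. By Jensen's inequality this dominates $\EE\big[(\int_0^T\|\phi^n\|_W^2)^{p/2}\big]$, which is exactly the $L^p(\Omega;L^2(0,T;W))$ bound.

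\emph{Third component.} We use the increment estimate \eqref{eq:time-regularity}, and we must check that every term in its proof is uniform in $n$. There are three points to verify.
\begin{enumerate}
\item The projection $(P_n)^*$ appears in front of both the nonlinearity and the noise in \eqref{eq:phi_n}. It is harmless because we test against $\psi\in W$ and $\brak{(P_n)^*x,\psi}=\brak{x,P_n\psi}$. It therefore suffices that $P_n$ is bounded on $W$ uniformly in $n$, and this holds since the $e_k$ are orthogonal in $W$.
\item The nonlinear term carries the product $\sup_t|\nabla\phi^n|\cdot(\int\|\phi^n\|_W^2)^{1/2}$. We split it by Cauchy--Schwarz in $\Omega$. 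This needs moments of order $2p$, and that is precisely why the first component is stated with $2p$.
\item The noise term is Gaussian, so its $p$-th moment is bounded by $\kappa^{p/2}(t-s)^{p/2}$.
\end{enumerate}
Integrating \eqref{eq:time-regularity} against $|t-s|^{-1-\alpha p}$ is finite for $\alpha<1/2$. Adding the $L^p(0,T)$ part, which is controlled by the first component, gives a uniform bound in $L^p(\Omega;W^{\alpha,p}(0,T;W'))$ for $(I-\Delta)\phi^n$.

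To pass to the stated space we use that $(I-\Delta)^{-1}$ maps $W'$ continuously into a space containing $V'$ up to the appropriate identification. More precisely, we read the statement as concerning $(I-\Delta)\phi^n$ in $W'$, which is how the estimate was derived. If $V'$ is meant literally, we instead use $W\hookrightarrow V^2$, so that $\|(I-\Delta)^{-1}\cdot\|_{V'}\lesssim\|\cdot\|_{W'}$, and this yields the bound for $\phi^n$ itself.

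\emph{Main obstacle.} The main obstacle is bookkeeping rather than a new idea: every constant must be independent of $n$ (uniform boundedness of $P_n$ on $W$, and $\kappa^n\le\kappa$), and the localization must make the BDG absorption legitimate.
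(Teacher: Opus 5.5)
Your proof of the three bounds is the paper's own argument. It uses Steps 1--3: localized BDG with Young absorption for the $2p$-moments, Cauchy--Schwarz in $\Omega$ for the nonlinear increment, the Gaussian bound for the noise, and integration against $|t-s|^{-1-\alpha p}$. In two places you are more careful than the text. First, the displayed increment in Step 3 omits the $(P_n)^*$ in front of the nonlinearity and the noise. Your remark repairs this correctly: $P_n$ restricted to $W$ is the $W$-orthogonal projection onto $V_n$, because $\brak{x,e_k}_V=\lambda_k^{-1}\brak{A^{1/2}x,A^{1/2}e_k}$. Second, you account for the $L^p(0,T)$ part of the $W^{\alpha,p}$ norm and for the uniformity $\kappa^n\le\kappa$, $\|P_n\phi_0\|_V\le\|\phi_0\|_V$.

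The one step that fails is your fallback for reading $V'$ literally.
\begin{itemize}
\item From $W\hookrightarrow V^2$, duality only gives $V^{-2}\hookrightarrow W'$, i.e.\ $\|y\|_{W'}\lesssim\|y\|_{V^{-2}}$. This is the opposite direction from what you need.
\item The inequality you assert is $\|(I-\Delta)^{-1}y\|_{V'}=|(I-\Delta)^{-3/2}y|\lesssim\|y\|_{W'}$. By duality and reflexivity of $W$, it is equivalent to $V^3\hookrightarrow W$, which is not part of \Cref{ass}.
\item It is false for $A_n$ in \Cref{example_op}. The first Dirichlet eigenfunction of $-\Delta$ lies in every $V^s$. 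By Hopf's lemma its normal derivative does not vanish on $\partial\D$, so it is not in $W=W^{2,2}_0$.
\end{itemize}
For $A_0$ one has $W=V^2$, so $(I-\Delta)^{-1}W'=L^2\hookrightarrow V'$ and the literal reading is fine, but that is a special case.

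So drop the fallback and keep your first reading. What the paper actually proves is a uniform bound for $(I-\Delta)\phi^n$ in $W^{\alpha,p}(0,T;D(A^{-1/2}))$. The interpolation used later in the existence proof still works with this reading, since $\|x\|_V^2=\brak{(I-\Delta)x,x}\le\|(I-\Delta)x\|_{W'}\|x\|_W$ for $x\in W$.
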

We have now all the ingredients to construct our solution of \eqref{eq:main}.
\begin{proof}[Proof of \Cref{WP_H^1}, (Existence)]\label{proof_ex}
	The proof employs a compactness argument. For any $p > 1$ and $\alpha < 1/2$, thanks to the embedding $W \hookrightarrow V^2$, we have by a simple interpolation $\|x\|_{V}\le \|x\|^{2/3}_{W}\|x\|^{1/3}_{V'}$, thus for $\theta=1/3$, and $s_*=(\alpha p -1 -p)/(3p)$, so that $-1/s_* >2$, \cite[Corollary 9]{simon_compact_1987} implies that the embedding $L^2(0, T; W)\cap W^{\alpha,p}(0, T, V')\rightarrow  L^2(0, T; V)$ is compact. By \cite[Theorem 2.2]{flandoli_martingale_1995}, for $s>1$ and choosing now $\alpha<1/2$ such that $\alpha p >1$, we have also the compact embedding $L^2(0, T; W)\cap W^{\alpha,p}(0, T, V')\rightarrow C(0, T; V^{-s})$. Consider the space $\X=C(0, T; \R^\N)$ where $\R^\N$ is equipped with the distance $d((g_k)_{k\ge 1}, (h_k)_{k\ge 1}):= \sum_{k=1}^{+\infty} 2^{-k}(|g_k -h_k|\land 1)$ (which makes it a Polish space). 
	Thanks to the uniform estimates provided by \Cref{prop:gal_est} and combining the compact embeddings from above, the laws of $(\phi^n, (W_t^k)_{k\ge 1})$ are tight on $L^2(0, T;V) \cap C(0, T; V^{-s})\times \X$, thus, by Prokhorov's theorem, we can extract a subsequence (not relabelled) $\mu^{n}= \L((\phi^n, (W_t^k)_{k\ge 1}))$ and a probability measure $\mu$ on $L^2(0, T; V)\cap C(0, T; V^{-s})\times \X$ such that $\mu^n \weakstarto \mu$ in $\mathcal{P}(L^2(0, T; V)\cap C(0, T; V^{-s})\times \X)$. Thanks to Skorokhod's representation theorem, there exists a new probability space $(\tilde \Omega, \tilde \F, \tilde \F_t, \tilde \PP)$ and new processes $\tilde \phi^n, (\tilde W^{k,n}_t)_{k\ge 1}$ and $\tilde \phi, (\tilde W_t)_{k\ge 1}$ such that $(\tilde W_t)_{k\ge 1}$ is a family of standard brownian motions, $\tilde \phi^n$ still solves \eqref{eq:phi_n} with respect to ${(\tilde W^t_k)}_{k\ge 1}$ and 
	\begin{align*}
		\tilde \phi^n &\rightarrow \tilde \phi \qquad \qquad \quad \text{in } L^2(0, T;V) \cap C(0, T; V^{-s}) \quad   \tilde \PP-a.s. \\
		(W^{k,n}_t)_{k\ge 1} &\rightarrow ({\tilde W}^{k}_t)_{k\ge 1} \qquad \text{in } \X \quad  \tilde \PP-a.s.
	\end{align*}
	Moreover, up to extracting other subsequences, we can assume that 
	\begin{align*}
		\tilde \phi^n \rightharpoonup \tilde \phi \qquad &\text{in } L^2(0, T;W) \quad  \tilde \PP-a.s. \\
		\tilde \phi^n \weakstarto  \tilde \phi \qquad &\text{in } L^\infty(0, T;V) \quad  \tilde \PP-a.s. 
	\end{align*}
	Recalling the relation \eqref{eq:phi_n}, we have that for any test function $\psi \in W$, 
	\begin{align*}
		\brak{\tilde\phi^n_t, \psi} + \brak{\nabla \tilde\phi^n_0, \nabla \psi} &-\brak{\tilde\phi_0^n, \psi} + \brak{\nabla \tilde\phi^n_t, \nabla \psi} + \int _0^t \brak{\NN(\tilde\phi^n, v- \Delta \tilde\phi^n), P_n\psi}ds\\ &= -\int _0^t \brak{A^{1/2}\tilde\phi^n, A^{1/2}\psi}ds   + \brak{\tilde W_t, P_n \psi}
	\end{align*}
	Recalling $P_n x \rightarrow x$ strongly in $H^1_0$ and owning to the above stated convergences of $\tilde \phi^n$, we can pass to the limit in each term obtaining that $\tilde \phi$ verifies \eqref{eq:hm_weak_def}. The only delicate term is the nonlinearity which we treat explicitly: thanks to the almost sure strong convergence of $\tilde \phi^n$ in $L^2(0, T;V)$ we have that $\nabla^\perp \tilde\phi^n \otimes \nabla^\perp \tilde\phi^n$ almost surely converges strongly to $\nabla^\perp \tilde\phi \otimes \nabla^\perp \tilde\phi$ in $L^1(0, T; L^1)$. Recalling \eqref{N-properties 2} in \Cref{N_properties} and that $P_n \psi \rightarrow \psi$ also strongly in $W\hookrightarrow V^2$ we have that 
	\begin{align*}
		\int_0^t\brak{\NN(\tilde\phi^n, \Delta \tilde\phi^n), P_n\psi}ds&= \int_0^t \brak{\nabla^\perp \tilde \phi^n \otimes \nabla^\perp \tilde \phi^n , \nabla \otimes \nabla^\perp  P_n \psi}\\
		&\rightarrow \int_0^t \brak{\nabla^\perp \tilde \phi \otimes \nabla^\perp \tilde \phi : \nabla \nabla^\perp \psi} \\
		&=-\int_0^t\brak{\Delta \tilde\phi,\nabla^\perp\tilde\phi\cdot \nabla \psi}ds.
	\end{align*}
	In order to complete the proof we are left to prove \eqref{eq:en_balance} and \eqref{eq:p_integrab} hold and that $\tilde\phi$ have continuous paths in $V$. The proof of the Markovianity instead is deferred to \Cref{feller}. 
	We begin with an It\^{o} formula. 
	\begin{proposition}\label{ito-formula}\textbf{(It\^{o}-Formula for $\|\phi\|^2_{V}$})
		Let $\phi(t)$ be a solution of \eqref{eq:main} in the sense of \Cref{def-sol-H1}, then it holds 
		\begin{align*}
			|\phi(t)|^2 + |\nabla \phi(t)|^2 + 2\int_0^t|A^{1/2} \phi(s)|^2ds = |\phi_0|^2 + |\nabla \phi_0|^2 + \kappa t + 2\int_0^t\brak{\phi(s), dW_s},
		\end{align*}
		where $\kappa := \sum_k \sigma_k^2|(I-\Delta)^{-1/2}f_k|^2$.
	\end{proposition}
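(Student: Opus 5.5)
The plan is to apply an infinite-dimensional It\^{o} formula of Krylov--Rozovskii / Pardoux type for the square of the norm in a Gelfand triple, with the nonlinear term cancelling exactly as in the uniqueness argument. The natural variable is $u:=(I-\Delta)^{1/2}\phi$. Since $\phi\in L^2(0,T;W)$ and $W\hookrightarrow V^2$, we have $u\in L^2(0,T;\mathcal{W})$ with $\mathcal{W}:=(I-\Delta)^{1/2}W$, a dense subspace of $L^2$ continuously embedded in it. We then work in the triple $\mathcal{W}\subset L^2\subset\mathcal{W}'$. In these variables $|u|^2=\|\phi\|_V^2$. Testing \eqref{eq:hm_weak_def} with $\psi=(I-\Delta)^{-1/2}\tilde\psi$ shows that $u$ satisfies
\[
u(t)=u_0+\int_0^t F(s)\,ds+(I-\Delta)^{-1/2}W_t \quad \text{in } \mathcal{W}',
\]
with $F=-(I-\Delta)^{-1/2}\big(A\phi+\NN(\phi,v-\Delta\phi)\big)$.

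\textbf{Step 1: $F\in L^2(0,T;\mathcal{W}')$ almost surely.} The term $A\phi$ is handled directly from $\phi\in L^2(0,T;W)$. The linear transport term satisfies $|\brak{\nabla^\perp\phi\cdot\nabla v,\psi}|\lesssim|\nabla v|\,|\nabla\phi|\,\|\psi\|_W$, using $W\hookrightarrow V^2\hookrightarrow L^\infty$. For the quadratic term, \eqref{N_estimate_trace} gives $|\brak{\NN(\phi,\Delta\phi),\psi}|\lesssim|\nabla\phi|\,\|\phi\|_W\|\psi\|_W$. Its $\mathcal{W}'$-norm is therefore bounded by $\sup_t\|\phi\|_V\,\|\phi\|_W\in L^2(0,T)$. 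The noise $(I-\Delta)^{-1/2}W$ is an $L^2$-valued Wiener process with trace $\sum_k\sigma_k^2|(I-\Delta)^{-1/2}f_k|^2=\kappa$.

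\textbf{Step 2: apply the It\^{o} formula.} The Krylov--Rozovskii theorem (cf.\ \cite{rockner_yamadawatanabe_2008} or the classical reference) then yields that $u$ has a continuous $L^2$-valued version, which gives continuity of $\phi$ in $V$. It also gives
\[
|u(t)|^2=|u_0|^2+2\int_0^t{}_{\mathcal{W}'}\langle F,u\rangle_{\mathcal{W}}\,ds+\kappa t+2\int_0^t\brak{u,(I-\Delta)^{-1/2}dW_s}.
\]
Unwinding the notation, ${}_{\mathcal{W}'}\langle F,u\rangle=-|A^{1/2}\phi|^2-\brak{\NN(\phi,v-\Delta\phi),\phi}$, and the stochastic integral equals $\brak{\phi,dW_s}$.

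\textbf{Step 3: cancel the nonlinearity.} It remains to prove $\brak{\NN(\phi,v),\phi}=0$ and $\brak{\NN(\phi,\Delta\phi),\phi}=0$ for $\phi\in W$. Both follow from \eqref{N-properties 3} for smooth $\phi\in V$ and then by density. The second requires continuity of $\phi\mapsto\brak{\NN(\phi,\Delta\phi),\phi}$ in the $V^2$ topology, which \Cref{N_properties} provides ($\NN:V^2\to V^{-2}$ continuous). We also need smooth functions in $V$ to be dense in $W$ for the $V^2$ norm; I would approximate by $P_n\phi$, which converges in $W$ and lies in $D(A)\subset V^2$, with a mollification step if needed.

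\textbf{Main obstacle.} The delicate point is ensuring the Gelfand-triple framework is legitimate: $\mathcal{W}$ must be densely embedded in $L^2$, and the pairing must be consistent with the $L^2$ duality. Density follows from \Cref{rem:onb}, since the $(I-\Delta)^{1/2}e_k$ form an orthonormal basis of $L^2$. Should this framework be too awkward, the fallback is to pass to the limit directly in the Galerkin identity \eqref{gal-ito}, using uniqueness and lower semicontinuity. That route only gives an inequality, though, so I would prefer the Lions--Magenes/Krylov--Rozovskii route used in the uniqueness proof.
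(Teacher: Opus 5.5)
Your proposal is correct. It follows the first of the two options the paper mentions: an abstract Krylov--Rozovskii It\^{o} formula, for which the paper points to \cite[Theorem 2.13]{rozovsky_stochastic_2018}.

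The route the paper actually sketches is the hands-on one, and it works differently. It projects the solution itself, $\phi^n=P_n\phi$, so that $\|P_n\phi\|_V^2=\sum_{k\le n}\brak{\phi,e_k}_V^2$ is a finite sum of real semimartingales, each obtained by testing \eqref{eq:hm_weak_def} with $\psi=e_k$. It then applies the finite-dimensional It\^{o} formula and passes to the limit using $P_n\phi\to\phi$ in $L^2(0,T;W)$ and $\brak{\NN(\phi,v-\Delta\phi),P_n\phi}\to\brak{\NN(\phi,v-\Delta\phi),\phi}=0$. The domination comes from the same trilinear bound \eqref{N_estimate_trace} you use in Step 1. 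This gives an equality. Your remark that the fallback ``only gives an inequality'' is true for passing to the limit in the Galerkin identity \eqref{gal-ito}, but not for the paper's projection of the true solution.

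What your route buys is that continuity of $t\mapsto\phi(t)$ in $V$ comes directly out of the theorem; the paper instead combines weak continuity with continuity of the norm given by the formula. The cost is setting up the triple $\mathcal{W}\subset L^2\subset\mathcal{W}'$. You also need the version of the theorem that requires only almost-sure integrability of $u$ and $F$, or a localization by stopping times, since a solution in the sense of \Cref{def-sol-H1} has no a priori moment bounds. The paper's route is more elementary and self-contained.

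Your Step 3 can also be shortened. With the nonlinearity written as in \eqref{eq:hm_weak_def}, the cubic term tested against $\phi$ is $\brak{\Delta\phi,\nabla^\perp\phi\cdot\nabla\phi}$. This vanishes pointwise because $\nabla^\perp\phi\cdot\nabla\phi\equiv 0$, so no density argument is needed for that term.
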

	\begin{proof}(sketch) 
		The proof is just an application of the It\^{o} formula. We either use an abstract infinite dimensional version like \cite[Theorem 2.13]{rozovsky_stochastic_2018} or we can prove it by hand as for instance in \cite[Theorem 2.4]{flandoli_stochastic_2023}). Namely, we consider a finite dimensional projection on $V_n$, $\phi^n := P_n \phi$ which solves
		\begin{equation*}
			\partial_t (\phi^n- \Delta \phi^n) + P_n\NN(\phi, \Delta \phi) + A\phi^n=   P_ndW
		\end{equation*}
		We then apply the finite dimensional It\^{o} formula on $\|\phi^n\|_V^2$
		and thanks to the convergence property of the projection, $P_n \phi \rightarrow \phi$ in $W$, and the continuity property of the nonlinearity, we obtain $\brak{ \NN(\phi, \Delta \phi), \phi^n} \rightarrow 0,$ 
		and we can pass to the limit the finite dimensional relation to obtain the desired identity. 
	\end{proof}
	Now the continuity of the trajectories in $V$ follows by combining the weak continuity provided by the embedding $L^\infty(0, T, V)\cap C(0, T; V^{-s}) \hookrightarrow C_w(0, T; V)$ with the continuity of the $V$ norm provided by the It\^{o} formula in \Cref{ito-formula}.  
	Finally we have
	\begin{proposition}\label{prop:p_moments}
		Let $\phi_0 \in L^p_{\F_0}(\Omega;  V)$. The unique solution of \Cref{eq:main} in the sense of \Cref{def-sol-H1} satisfies, for every $p\ge 1$ 
		\begin{align}
			&\EE\left[\sup_{t\in [0, T]} (|\phi(t)|^{2p} + |\nabla\phi(t)|^{2p}) + \left(\int_0^T \|\phi(s)\|_V^2 ds\right)^p \right] \lesssim 1.
		\end{align}
		Moreover, for every $\alpha < 1/2$ it holds 
		$$\EE\left[ \|(I-\Delta)\phi\|_{W^{\alpha,p}(0, T; W')}\right] \le C.$$
	\end{proposition}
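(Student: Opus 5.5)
The plan is to derive the moment bounds directly from the Itô formula of \Cref{ito-formula}, repeating at the level of the limit solution the argument of Step 2 for the Galerkin system. For the time-regularity bound we pass to the limit in the Galerkin estimate \eqref{eq:time-regularity}, using lower semicontinuity.

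For the moments, take the supremum over $[0,T]$ in the identity of \Cref{ito-formula}. This gives
\begin{equation*}
\sup_{t\in[0,T]}\|\phi(t)\|_V^2 + 2\int_0^T|A^{1/2}\phi(s)|^2ds \le \|\phi_0\|_V^2 + \kappa T + 2\sup_{t\in[0,T]}\Bigl|\int_0^t\brak{\phi(s),dW_s}\Bigr|.
\end{equation*}
Raise both sides to the power $p$ using $(a+b+c)^p\lesssim_p a^p+b^p+c^p$. To control the stochastic integral we localize with $\tau_R=\inf\{t:\|\phi(t)\|_V^2>R\}\wedge T$, which is a genuine stopping time because the trajectories are continuous in $V$. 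Burkholder--Davis--Gundy then gives
\begin{equation*}
\EE\Bigl[\sup_{t\le \tau_R}\Bigl|\int_0^t\brak{\phi,dW}\Bigr|^p\Bigr]\le C_p\EE\Bigl[\Bigl(\int_0^{\tau_R}\sum_k\sigma_k^2\brak{(I-\Delta)^{1/2}\phi,(I-\Delta)^{-1/2}f_k}^2ds\Bigr)^{p/2}\Bigr]\le C_p(\kappa T)^{p/2}\EE\Bigl[\sup_{t\le\tau_R}\|\phi\|_V^{p}\Bigr].
\end{equation*}
Young's inequality bounds the last term by $\tilde C_p(\kappa T)^p+\tfrac14\EE[\sup_{t\le\tau_R}\|\phi\|_V^{2p}]$. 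This quantity is finite, bounded by $R^p$, so it can be absorbed into the left-hand side. The resulting bound on $\EE[\sup_{t\le\tau_R}\|\phi\|_V^{2p}+(\int_0^{\tau_R}\|\phi\|_W^2)^p]$ depends only on $\EE\|\phi_0\|_V^{2p}$, $\kappa$, $p$ and $T$, and not on $R$. Since paths are continuous, $\tau_R\uparrow T$, and monotone convergence removes the localization.

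The integral term in the statement is written with the $V$ norm. It is controlled by the $W$ norm because $\|x\|_V\lesssim\|x\|_{V^2}\lesssim\|x\|_W$. For the sup term, note that $|\phi|^{2p}+|\nabla\phi|^{2p}\le 2\|\phi\|_V^{2p}$.

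The integrability hypothesis needs care, and this is the main subtle point. The argument requires $\phi_0\in L^{2p}(\Omega;V)$. I read the statement's $L^p$ assumption with $p$ relabelled accordingly, as in \Cref{WP_H^1}. Alternatively, the bound holds trivially when the right-hand side is infinite.

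For the time regularity, the Galerkin bound \eqref{eq:time-regularity} is uniform in $n$. On the Skorokhod space we have $\tilde\phi^n\to\tilde\phi$ in $C(0,T;V^{-s})$ almost surely. Hence $(I-\Delta)(\tilde\phi^n(t)-\tilde\phi^n(s))\to(I-\Delta)(\tilde\phi(t)-\tilde\phi(s))$ in a negative Sobolev space for every pair $(t,s)$. The $W'$ norm is lower semicontinuous under this convergence, since it is a supremum over a dense set of test functions. Fatou's lemma, applied first in $(t,s)$ and then in $\omega$, therefore yields
\begin{equation*}
\EE\bigl[\|(I-\Delta)\phi\|^p_{W^{\alpha,p}(0,T;W')}\bigr]\le\liminf_n\EE\bigl[\|(I-\Delta)\tilde\phi^n\|^p_{W^{\alpha,p}(0,T;W')}\bigr]\le C.
\end{equation*}
By pathwise uniqueness the law of $\tilde\phi$ coincides with that of $\phi$, so the bound transfers to $\phi$. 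Alternatively, one can redo the estimates of Step 3 directly on the weak formulation \eqref{eq:hm_weak_def}, using the moment bound just obtained.
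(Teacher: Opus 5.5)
Your proposal is correct. The moment bound is the paper's own argument. Its proof just says to rerun Step~2 on the limit solution: supremum in the It\^o identity of \Cref{ito-formula}, $p$-th power, localized BDG, then Young and absorption. You are also right that this needs $\phi_0\in L^{2p}_{\F_0}(\Omega;V)$, as in Step~2, because $\EE\sup_t\|\phi(t)\|_V^{2p}\ge\EE\|\phi_0\|_V^{2p}$. A small point: the stopped supremum is bounded by $\max(R,\|\phi_0\|_V^2)$ rather than by $R$, which is still integrable under that hypothesis.

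Where you differ is the time regularity. The paper, like your fallback option, repeats Step~3 directly on $\phi$. Testing \eqref{eq:hm_weak_def} with $\psi\in W$ gives the identity for $(I-\Delta)(\phi(t)-\phi(s))$ in $W'$, and each term is bounded as for the Galerkin system, using the moment bounds just proved. Your main route instead transfers the uniform Galerkin bound \eqref{eq:time-regularity} to the limit by lower semicontinuity and Fatou on the Skorokhod space. You then identify laws through pathwise uniqueness. This avoids redoing the estimates, but it depends on the compactness construction and on Yamada--Watanabe, which the paper only sketches. The direct route applies to any solution on any stochastic basis.

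One detail in your semicontinuity step needs care. Convergence in $C(0,T;V^{-s})$ only lets you pass to the limit in $\langle\cdot,\psi\rangle$ for $\psi\in V^{s+2}$. Density of $W\cap V^{s+2}$ in $W$ is not evident from \Cref{ass} alone. It is cleaner to use the a.s.\ convergence in $L^2(0,T;V)$. Along a further subsequence this gives $\tilde\phi^n(t)\to\tilde\phi(t)$ in $V$ for a.e.\ $t$, hence $(I-\Delta)\tilde\phi^n(t)\to(I-\Delta)\tilde\phi(t)$ in $V'\hookrightarrow W'$, which is all Fatou needs.
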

	\begin{proof}
		Thanks to \Cref{ito-formula}, we can repeat verbatim by the same steps made to prove the analogous bounds for the Galerkin approximation. 
	\end{proof}
	Thanks to this last proposition, the proof of \Cref{WP_H^1} is complete. 
\end{proof}
\begin{proof}[Proof of \Cref{WP_H^2}]
	From \Cref{WP_H^1} and \Cref{ass2} it is easy to see that $\xi= \phi- \Delta\phi - v$ satisfies the weak formulation in \Cref{eq:weak_L^2}. Applying the It\^{o} formula to $|\xi|^2$ in the same fashion as in the proof of \Cref{ito-formula}, we immediately obtain the energy balance \eqref{eq:L2_en_bal}. The markovianity and the Feller property of $\xi$ can be proved by the same steps as we will do for $\phi$ in \Cref{feller}.
\end{proof}
\subsection{Feller property and existence of an invariant measure}
\begin{proposition}\label{prop:cont-init-data}
	Let $(\phi_0^n)_{n\ge 0}$ be a sequence in $L^2_{\F_0} (V)$ such that $\phi_0^n \rightarrow \phi_0$ a.s. in $V$. Let $\phi(t; \phi_0^n)$ and $\phi(t; \phi_0)$ denote the solutions to \Cref{eq:main} starting respectively from $\phi_0^n$ and $\phi_0$. Then $\phi(t; \phi_0^n) \rightarrow \phi(t, \phi_0)$ in $L^\infty(0, T; V)$ $\PP-a.s.$ . 
\end{proposition}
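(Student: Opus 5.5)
The plan is to reuse the pathwise stability estimate \eqref{eq:gronwall_uniq} from the uniqueness part of the proof of \Cref{WP_H^1}. Since the noise is additive, it cancels in the difference of two solutions driven by the same Wiener process. This gives a pathwise, deterministic-type estimate with no stochastic integral left over.

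Set $\phi^n(t):=\phi(t;\phi_0^n)$, $\phi(t):=\phi(t;\phi_0)$ and $m^n:=\phi^n-\phi$. The difference $m^n$ solves
\[
\partial_t(m^n-\Delta m^n)+Am^n+\NN(m^n,v)+\NN(\phi^n,-\Delta\phi^n)-\NN(\phi,-\Delta\phi)=0
\]
weakly. This is the same equation treated in the uniqueness proof, with the roles $\psi=\phi^n$ and $\phi$ as before. Lions--Magenes justifies the energy identity for $\|m^n\|_V^2$.

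The quadratic terms reduce to $\brak{\NN(m^n,\Delta m^n),\phi}$ by the computation \eqref{eq:nonlinear_trick}. This is bounded by $\tfrac14\|m^n\|_W^2+C'\|\phi\|_W^2|\nabla m^n|^2$.

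The linear term $\brak{\NN(m^n,v),m^n}$ vanishes by \eqref{N-properties 3}. It requires $v\in L^4$, which holds under the standing assumptions on $v$.

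Gronwall then gives, almost surely,
\[
\sup_{t\in[0,T]}\|m^n(t)\|_V^2\le \|\phi_0^n-\phi_0\|_V^2\exp\Big(C'\int_0^T\|\phi(s)\|_W^2ds\Big).
\]
The key point is that the exponent involves only the limit solution $\phi$ and not $\phi^n$. Hence it is a single almost surely finite random variable, because $\phi\in L^2(0,T;W)$ a.s. The right-hand side therefore tends to zero a.s. as $\phi_0^n\to\phi_0$ in $V$, which gives convergence in $L^\infty(0,T;V)$.

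The main obstacle is rigor in the energy identity for $m^n$. One needs $\partial_t(I-\Delta)^{1/2}m^n$ in the dual of $D(A^{1/2}(I-\Delta)^{-1/2})$, which follows from \Cref{N_properties} exactly as in the uniqueness proof. One also needs the nonlinear cancellation, which is legitimate for $\phi,\phi^n\in L^2(0,T;W)\cap L^\infty(0,T;V)$ by density and the continuity of $\NN:V^2\to V^{-2}$.

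A secondary point is that the solutions must be defined on the same probability space with the same $W$. This holds because the solutions are pathwise unique and strong, as discussed after \Cref{WP_H^1}. Integrability of the initial data beyond $L^2$ is not needed, since the estimate is pathwise.
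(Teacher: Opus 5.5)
Your proposal is correct and is essentially the paper's own argument: set $m^n=\phi(\cdot;\phi_0^n)-\phi(\cdot;\phi_0)$, rerun the uniqueness computation to get the pathwise Gronwall bound \eqref{eq:gronwall_uniq}, observe that the exponent $C'\int_0^T\|\phi(s;\phi_0)\|_W^2\,ds$ involves only the limit solution and is a.s. finite, and let $n\to\infty$. Your final estimate has the squared $V$-norm on the left and the $W$-norm in the exponent, which is the correct form of the paper's display (the paper writes an unsquared $V$-norm on the left and $|\phi|_V^2$ in the exponent).
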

\begin{proof}
	Repeating the same steps as in the proof of uniqueness in \Cref{proof_uniq}, after setting $m^n(t)= \phi(t, {\phi^n_0}) - \phi(t, {\phi_0})$,  we arrive at \eqref{eq:gronwall_uniq}. Taking the supremum in time we get
	\begin{align}
		\sup_{t\in [0, T]} \|\phi^n(t, \phi^n_0)- \phi(t, \phi_0)\|_V\le \|\phi^n_0- \phi_0\|_V^2 \exp\left(C'\int_0^T |\phi(s, \phi_0)|_V^2 ds\right). 
	\end{align}
	Since $\PP-a.s.$ $\int_0^T |\phi(s, \phi_0)|_V^2 ds < \infty$, by letting $n \rightarrow\infty$ we get $\phi^n(t, \phi^n_0)- \phi(t, \phi_0) \rightarrow 0$ in $L^{\infty}(0, T; V)$ $\PP-a.s.$. 
\end{proof}
\begin{remark}
	The proof of this lemma uses the fact that $\phi, \phi^n$ can be defined on the same probability space and in particular with respect to the same noise $W_t$, i.e. we are in a setting were existence of probabilistically strong solutions holds. Since we have omitted the proof of this fact, let us briefly comment on how to prove \Cref{prop:cont-init-data} without making use of probabilistically strong solutions. It is enough to consider the sequence of laws $\L (\phi^n)$ and thanks to the a priori estimates of \Cref{sec:WP}, one can employ the same compactness argument as in \Cref{proof_ex} and passage to the limit in the weak formulation to show that $\phi^n$ converges almost surely, on an auxiliary probability space, to $\phi$ solving \eqref{eq:main} . 
	Thus the thesis holds, up to a change of probability space, which however is irrelevant in obtaining the Feller property in the next corollary. 
\end{remark}
\begin{corollary}\label{feller}
	The process $\phi(t)$ is Markov in V and its transition semigroup, defined as $P_t\Psi(\phi_0):= \EE\left[ \Psi(\phi(t; \phi_0))\right]$ for any $\phi_0 \in V$ is Feller, namely $P_t: C_b(V)\rightarrow C_b(V)$. 
\end{corollary}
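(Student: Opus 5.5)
The Feller property follows directly from \Cref{prop:cont-init-data} together with dominated convergence. Markovianity follows from pathwise uniqueness (\Cref{WP_H^1}) by the standard argument in \cite[Chapter 9]{daprato_stochastic_}.

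\emph{Feller property.} Fix $\Psi\in C_b(V)$, $t\ge 0$ and a deterministic sequence $\phi_0^n\to\phi_0$ in $V$. By \Cref{prop:cont-init-data}, $\phi(t;\phi_0^n)\to\phi(t;\phi_0)$ in $V$ $\PP$-a.s. If strong solutions are not available, the same holds up to a change of probability space, and this does not affect the laws. By continuity of $\Psi$, $\Psi(\phi(t;\phi_0^n))\to\Psi(\phi(t;\phi_0))$ a.s. Since $|\Psi|\le\|\Psi\|_\infty$, dominated convergence gives $P_t\Psi(\phi_0^n)\to P_t\Psi(\phi_0)$. Boundedness of $P_t\Psi$ is trivial, so $P_t\Psi\in C_b(V)$. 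Measurability of $x\mapsto P_t(x,\Gamma)$ for Borel $\Gamma$ follows from this continuity by a monotone class argument. This is needed to make sense of the semigroup.

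\emph{Markov property.} Denote by $\phi(t;s,\eta)$ the solution started at time $s$ from an $\F_s$-measurable $\eta\in L^2_{\F_s}(\Omega,V)$. Existence of this solution comes from \Cref{WP_H^1} applied with the shifted noise $W_{s+\cdot}-W_s$ and filtration $\F_{s+\cdot}$; the $2p$-moment requirement can be removed by truncating $\eta$ on the sets $\{\|\eta\|_V\le R\}$, which are $\F_s$-measurable, and patching the solutions using pathwise uniqueness.

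Step 1 is the flow property. Both $\phi(t;0,\phi_0)$ and $\phi(t;s,\phi(s;0,\phi_0))$ solve the weak formulation \eqref{eq:hm_weak_def} on $[s,T]$ with the same datum at time $s$. Pathwise uniqueness then gives $\phi(t;0,\phi_0)=\phi(t;s,\phi(s;0,\phi_0))$ a.s.

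Step 2 is the conditional expectation identity. For deterministic $x$, $\phi(t;s,x)$ is measurable with respect to the increments $\sigma(W_r-W_s, r\in[s,t])$, hence independent of $\F_s$. Its law equals that of $\phi(t-s;0,x)$, since the shifted noise has the same law and uniqueness in law follows from pathwise uniqueness. For simple $\F_s$-measurable $\eta=\sum_i x_i\one_{A_i}$, locality (again from uniqueness) gives $\phi(t;s,\eta)=\sum_i\one_{A_i}\phi(t;s,x_i)$. Therefore
\[
\EE[\Psi(\phi(t;s,\eta))\mid\F_s]=P_{t-s}\Psi(\eta).
\]
For general $\eta$, approximate by simple $\eta_k\to\eta$ in $V$ a.s. Pass to the limit on the left using \Cref{prop:cont-init-data}, and on the right using the Feller continuity of $P_{t-s}\Psi$ just proved. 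Combining with Step 1 yields $\EE[\Psi(\phi(t))\mid\F_s]=P_{t-s}\Psi(\phi(s))$. This gives the Markov property and the semigroup law $P_tP_s=P_{t+s}$.

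The main obstacle is Step 2 in the absence of a constructed strong solution. Measurability of $\phi(t;s,x)$ with respect to the future increments, and measurability of $x\mapsto\phi(t;s,x)$, both rely on the Yamada--Watanabe framework invoked in the remarks. I would cite it there, together with the continuous dependence of \Cref{prop:cont-init-data}, to justify joint measurability in $(x,\omega)$.
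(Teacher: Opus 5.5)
Your proposal is correct and follows essentially the same route as the paper. Feller continuity comes from \Cref{prop:cont-init-data} plus dominated convergence, and the Markov property comes from the flow identity given by pathwise uniqueness, the freezing lemma for simple $\F_s$-measurable data, and an approximation argument. Your extra steps (monotone class for measurability of the kernel, truncation to drop the moment assumption, and deriving $P_{s,t}=P_{t-s}$ directly instead of citing it in a separate corollary) are sound elaborations, not a different strategy.
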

\begin{proof}
	Thanks to \Cref{prop:cont-init-data}, we are left to prove only the Markov property. For every $t\ge t_0 \ge 0$, let $\phi(t,t_0, \phi_0)$ be the solution of \Cref{eq:main} with initial condition $\phi_0 \in L^{2p}_{\F_{t_0}}(\Omega; V)$ given at time $t_0$ and defined analogously as in  \Cref{def-sol-H1} and set 
		$$P_{t_0,t}(\Psi)({\phi_0}) = \EE\left[\Psi(\phi (t, t_0, \phi_0))\right]$$
	We have to show that for all $ \Psi \in B(V),$ and for all $   0 \le t_0 \le s \le t$
	\begin{align}\label{eq:markov_prop}
		\EE\left[ \Psi(\phi(t, t_0; \phi_0))| \F_s\right]= P_{s,t}(\Psi)(\phi(s, t_0, \phi_0)).
	\end{align}
	The uniqueness proved in \Cref{proof_uniq} provides 
	$$\phi(t, t_0;  \phi_0)= \phi(t, s; \phi(s, t_0; \phi_0)).$$
	Denote $\eta:=\phi(s, t_0; x_0)$, then \eqref{eq:markov_prop} can be rewritten as $  \EE\left[ \Psi(\phi(t; s, \eta))| \F_s\right]= P_{s,t}(\Psi)(\eta)$.
	It is enough to prove this identity for a generic square integrable $\F_s-$measurable random variable $\eta$. Moreover we can assume $\Psi \in C_b(V)$. 
	We assume that $\eta$ takes only a finite number of values, i.e.: 
	$$\eta = \sum_{j=1}^N \eta_j \one _{\Gamma_j},$$
	where $\Gamma_1, \ldots, \Gamma_N \subset \F_s$ is a partition of $\Omega$ and $\eta_j \in V $. Then by uniqueness, $\phi(t, s, \eta)= \sum _{j=1}^n \phi(t, s, \eta_j)\one _{\Gamma_j}$, thus 
	$\EE\left[ \Psi(\phi(t, s; \eta))| \F_s\right] = \sum_{j=1}^n \EE\left[ \Psi(\phi(t, s; \eta_j)) \one _{\Gamma_j}| \F_s\right]$
	but now since $\eta_j$ is deterministic, $\phi(t, s; \eta_j)$ is independent of $\F_s$ while $\one _{\Gamma_j}$ is measurable with respect to $\F_s$, the Freezing lemma provides
	$\sum_{j=1}^n \EE\left[ \Psi(\phi(t, s; \eta_j)) \one _{\Gamma_j}| \F_s\right] = \sum_{j=1}^n P_{s,t}\Psi(\eta_j)\one _{\Gamma_j}= P_{s,t}\Psi(\eta).$
	The proof is completed by an approximation argument, namely if $\EE[|\eta|^2]<\infty$, then we can find a sequence $\eta_n$ of simple functions such that $\eta_n\rightarrow \eta$ in $L^2_{\F_s}(V)$ and \eqref{eq:markov_prop} holds, but then, up to a subsequence, $\eta_n\rightarrow \eta$ $\PP$-a.s. in $V$ and thus $\Psi(\phi(t, s, \eta_n))\rightarrow \Psi(\phi(t, s, \eta))$ $\PP-a.s.$ and the dominated convergence theorem give the desired identity (one can even use the stronger \Cref{prop:cont-init-data} to show the convergence).    
\end{proof}

	\begin{corollary}
		The transition semigroup $P_t$ is homogeneous, namely $P_{s,t}=P_{0, t-s} =P_{t-s}$. 
	\end{corollary}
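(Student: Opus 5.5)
We have to prove that $P_{s,t}=P_{t-s}$, that is, $\EE[\Psi(\phi(t,s;\eta))]=\EE[\Psi(\phi(t-s,0;\eta))]$ for every deterministic $\eta\in V$ and every $\Psi\in B(V)$. The idea is that the equation is autonomous and the noise has stationary increments. The solution started at time $s$ is therefore driven by the shifted Brownian motions $\tilde W^k_r:=W^k_{s+r}-W^k_s$, $r\ge 0$. These are again i.i.d.\ standard Brownian motions, now with respect to the filtration $\tilde\F_r:=\F_{s+r}$.

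\textbf{Step 1: time shift.} Set $\tilde\phi(r):=\phi(s+r,s;\eta)$. Write the weak formulation \eqref{eq:hm_weak_def} on $[s,s+r]$ and change variables in the time integrals. For every $\psi\in W$ this gives exactly the weak formulation on $[0,r]$ with initial datum $\eta$ and noise $\tilde W$. The coefficients $A$, $v$, $\NN$ do not depend on time, and the noise term becomes $\brak{\psi,W_{s+r}-W_s}=\brak{\psi,\tilde W_r}$. Adaptedness to $\tilde\F_r$ and the regularity $C(0,T;V)\cap L^2(0,T;W)$ are inherited directly. So $\tilde\phi$ is a solution in the sense of \Cref{def-sol-H1} on the filtered space $(\Omega,\F,\tilde\F_r,\PP)$ driven by $\tilde W$.

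\textbf{Step 2: uniqueness in law.} The solution $\phi(\cdot,0;\eta)$ is driven by $W$ on the original space. By Step 1, $\tilde\phi$ solves the same equation, driven by a Brownian family with the same law. Pathwise uniqueness (\Cref{proof_uniq}) holds and existence of (strong) solutions is given by \Cref{WP_H^1}. The Yamada--Watanabe theorem invoked in the remark after \Cref{WP_H^1} then yields uniqueness in law. Equivalently, the strong solution is a measurable functional $\Phi(\eta,W)$ of the initial datum and the noise path, so that $\tilde\phi=\Phi(\eta,\tilde W)$ and $\phi(\cdot,0;\eta)=\Phi(\eta,W)$, and $\tilde W\overset{d}{=}W$. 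In either form, $\L(\tilde\phi(t-s))=\L(\phi(t-s,0;\eta))$, which gives $P_{s,t}\Psi(\eta)=P_{0,t-s}\Psi(\eta)$ for deterministic $\eta$.

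\textbf{Main obstacle.} The delicate point is Step 2, because strong existence was only sketched in the paper. To avoid relying on it, I would fall back on the Galerkin scheme. The finite-dimensional SDE \eqref{eq:gal_def} has Lipschitz-on-bounded-sets coefficients, so its strong solution is a measurable functional of $(\eta,W)$ and the shift identity holds exactly at level $n$. Passing to the limit as in the proof of existence, combined with pathwise uniqueness, transfers the equality of laws to the limit solutions. Finally, the Markov property of \Cref{feller} extends the identity to $\F_s$-measurable $\eta$.
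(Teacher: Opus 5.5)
Your proof is correct and follows the paper's route. The paper only notes that the coefficients are time-independent and cites Da Prato--Zabczyk, Corollary 9.16, whose argument is your shift $\tilde W_r = W_{s+r}-W_s$ plus the fact that the law of the solution is determined by the initial datum and the law of the noise; your Yamada--Watanabe step (pathwise uniqueness implying uniqueness in law) is what makes that argument legitimate for this non-Lipschitz nonlinearity, while your final extension to $\F_s$-measurable $\eta$ is unnecessary, since $P_{s,t}$ is evaluated at deterministic points.
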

	\begin{proof}
		It follows immediately since the equation's coefficients are time independent, see \cite[Corollary 9.16]{daprato_stochastic_}
	\end{proof}

\begin{proposition}\label{tightness}
	Let $\phi_0 \in L^p_{\F_{0}}(\Omega, V)$ and $\mu(s; \phi_0)= \mathscr{L}(\phi(t; \phi_0))$, then the sequence of laws $\nu^T= \frac{1}{T}\int _0^T \mu(s; \phi_0)ds$ is tight on $V$. 
\end{proposition}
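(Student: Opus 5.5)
The plan is the standard Krylov--Bogoliubov tightness argument, based on the energy identity \eqref{eq:en_balance} and the compact embedding $W\hookrightarrow V$ from \Cref{ass}.

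\emph{Step 1: a time-averaged bound on $\|\phi\|_W^2$.} Starting from \eqref{eq:en_balance} with initial datum $\phi_0$, I drop the nonnegative term $\EE\|\phi(T)\|_V^2$ and divide by $T$. This gives
\begin{equation*}
	\frac{1}{T}\int_0^T \EE\left[|A^{1/2}\phi(s;\phi_0)|^2\right] ds \le \frac{\EE\|\phi_0\|_V^2 + \kappa T}{2T} \le \frac{\EE\|\phi_0\|_V^2}{2} + \frac{\kappa}{2},
\end{equation*}
for every $T\ge 1$. The right-hand side does not depend on $T$. If $p<2$, so that \eqref{eq:en_balance} is not directly covered by \Cref{WP_H^1} (which asks for $2p$-moments), I would first take $\phi_0$ with higher moments. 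Alternatively, I would use that the identity of \Cref{ito-formula} holds pathwise and localize with a stopping time, so that the martingale term has zero mean; only $\EE\|\phi_0\|_V^2<\infty$ is really needed.

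\emph{Step 2: the compact sets.} For $R>0$ set $K_R:=\{x\in W: \|x\|_W\le R\}$. By \Cref{ass}(3), $W$ embeds compactly in $V$, so $K_R$ is relatively compact in $V$. It is in fact compact in $V$. Indeed, $K_R$ is weakly closed in $W$, and the limit in $V$ of a sequence in $K_R$ has a subsequence converging weakly in $W$, so by lower semicontinuity of the norm it stays in $K_R$. In particular $K_R$ is a closed, hence Borel, subset of $V$.

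\emph{Step 3: Chebyshev and Fubini.} By Markov's inequality,
\begin{equation*}
	\nu^T(V\setminus K_R) = \frac{1}{T}\int_0^T \PP\left(\|\phi(s)\|_W > R\right) ds \le \frac{1}{R^2 T}\int_0^T \EE\left[\|\phi(s)\|_W^2\right] ds \le \frac{\EE\|\phi_0\|_V^2 + \kappa}{2R^2}.
\end{equation*}
This quantity becomes smaller than any prescribed $\epsilon$ once $R$ is large, uniformly in $T\ge 1$, which is exactly tightness of $\{\nu^T\}$ on $V$. Combined with the Feller property (\Cref{feller}) and \Cref{thm:KB}, this will give existence of an invariant measure.

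\emph{Main obstacle.} The only delicate point is the measurability of $s\mapsto \PP(\phi(s)\in\Gamma)$, which is needed to define $\nu^T$ and to use Fubini. It follows from continuity of the paths in $V$ and from joint measurability of $(s,\omega)\mapsto\|\phi(s,\omega)\|_W\in[0,\infty]$. The latter holds because this map is a supremum over finite-dimensional projections, $\|x\|_W^2=\sum_k\lambda_k\brak{x,e_k}_V^2$, and each term is continuous in $V$.
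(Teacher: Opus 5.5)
Your proposal is correct and follows the same route as the paper: the energy balance \eqref{eq:en_balance} bounds $\frac{1}{T}\int_0^T \EE\|\phi(s)\|_W^2\,ds$ uniformly in $T\ge 1$, and Chebyshev's inequality together with the compact embedding $W\hookrightarrow V$ from \Cref{ass} gives tightness. Applying Chebyshev at each fixed time and then averaging via Fubini, as you do, is the precise form of the step the paper states more loosely as Markov's inequality for the random variable $\frac{1}{T}\int_0^T|A^{1/2}\phi(s)|^2ds$.
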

\begin{proof}
	Thanks to the It\^{o} formula in \Cref{ito-formula}, we have, for some constant $C>0$
	$$\EE\left[\int_0^T |A^{1/2} \phi(s)|^2ds\right] \le CT.$$
	Hence, by Markov's inequality 
	$$\PP\left( \frac{1}{T}\int_0^T |A^{1/2} \phi(s)|^2ds \ge R\right) \le \frac{C}{R},$$
	and since $D(A^{1/2})\hookrightarrow V$ compactly thanks to \Cref{ass1.2} in \Cref{ass}, we obtain the thesis.
\end{proof}
\begin{proof}[Proof of \Cref{thm:ergod}, (Existence and energy balance)]
	Thanks to \Cref{tightness}, we have that $R_t^* \delta_{x_0} $ as defined in \Cref{sec:invariant} is a tight sequence. Moreover thanks to \Cref{feller} we have that $P_t$ is Feller, thus we can apply \Cref{thm:KB} to obtain an invariant measure. Finally, the energy balance \eqref{V_balance_stat} follows from \Cref{ito-formula} and stationarity (see for instance \cite[Theorem 3.3]{albeverio_spde_2008} for a detailed argument). 
\end{proof}
\subsection{Uniqueness via coupling}
In this section we prove the uniqueness of the invariant measure applying the theory of \Cref{sec:coupling-method}. We begin proving an exponential estimate that we will need later. 
\begin{proposition}\label{prop:exp_est}
	Let $\phi$ be a solution of \eqref{eq:main} in the sense of \Cref{def-sol-H1}, then the following exponential bound holds
	\begin{align*}
		\PP\left( \sup_{t\ge 0} \left(\|\phi(t)\|_V^2 - \|\phi_0\|_V^2 + \frac{1}{2}\int_0^t \|\phi(s)\|_W^2 ds - \kappa t \right) \ge R\right) \le e^{-\frac{R}{2C\kappa}},
	\end{align*}
\end{proposition}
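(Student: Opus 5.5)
The argument is the standard exponential martingale inequality applied to the It\^{o} formula of \Cref{ito-formula}. By \Cref{ito-formula},
\begin{equation*}
\|\phi(t)\|_V^2 - \|\phi_0\|_V^2 + 2\int_0^t \|\phi(s)\|_W^2 ds - \kappa t = 2\int_0^t \brak{\phi(s), dW_s} =: M_t ,
\end{equation*}
where $M_t$ is a continuous local martingale (a true martingale by \Cref{prop:p_moments}). Its quadratic variation is bounded exactly as in Step 2 of the Galerkin estimates:
\begin{equation*}
\langle M\rangle_t = 4\int_0^t \sum_k \sigma_k^2 \brak{\phi(s), f_k}^2 ds \le 4\kappa \int_0^t \|\phi(s)\|_V^2 ds \le 4\kappa C_0 \int_0^t \|\phi(s)\|_W^2 ds .
\end{equation*}
For the first inequality I write $\brak{\phi, f_k} = \brak{(I-\Delta)^{1/2}\phi, (I-\Delta)^{-1/2} f_k}$ and apply Cauchy--Schwarz. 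For the second I use the continuous embedding $W\hookrightarrow V^2 \hookrightarrow V$ from \Cref{ass}, with constant $C_0$.

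Next I rewrite the quantity in the statement. Set $X_t := \|\phi(t)\|_V^2 - \|\phi_0\|_V^2 + \frac12\int_0^t\|\phi\|_W^2 ds - \kappa t$. Then
\begin{equation*}
X_t = M_t - \frac32 \int_0^t \|\phi\|_W^2 ds \le M_t - \frac{3}{8\kappa C_0}\langle M\rangle_t .
\end{equation*}
So it suffices to bound $\PP(\sup_{t\ge0}(M_t - \frac{\gamma}{2}\langle M\rangle_t)\ge R)$ with $\gamma = \frac{3}{4\kappa C_0}$.

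For this I use the exponential martingale inequality. The process $Z_t = \exp(\gamma M_t - \frac{\gamma^2}{2}\langle M\rangle_t)$ is a nonnegative continuous local martingale, hence a supermartingale with $\EE Z_t \le 1$. On the event $\{\sup_t (M_t - \frac\gamma2 \langle M\rangle_t)\ge R\}$ we have $\sup_t Z_t \ge e^{\gamma R}$. Doob's maximal inequality for nonnegative supermartingales, applied on $[0,T]$ and then with $T\to\infty$ by monotonicity, therefore gives probability at most $e^{-\gamma R}$. Since $\gamma \ge \frac{1}{2C\kappa}$ with $C := \frac{2}{3}C_0$ (or any larger constant), the claim follows.

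I expect no real obstacle. The only points needing care are the following.
\begin{itemize}
\item The supremum is over the infinite horizon $t\ge0$. This is handled by letting $T\to\infty$ through increasing events, using the continuity of the paths provided by \Cref{WP_H^1}.
\item The It\^{o} formula must hold pathwise for all $t$ simultaneously. This holds because both sides are continuous in $t$.
\item The constant $C$ in the statement is simply the embedding constant (up to a numerical factor) relating $\|\cdot\|_V$ to $\|\cdot\|_W$. If instead one uses the coercivity $\|x\|_W^2\ge\lambda_1\|x\|_V^2$ from \Cref{lem:proj-est2} with $N=0$, one may take $C$ proportional to $\lambda_1^{-1}$.
\end{itemize}
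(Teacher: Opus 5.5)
Your proposal is correct and follows the same route as the paper. Both arguments start from the It\^{o} formula of \Cref{ito-formula}, bound $\langle M\rangle_t \lesssim \kappa\int_0^t\|\phi(s)\|_W^2ds$ via the embedding $W\hookrightarrow V$, absorb part of the dissipation to reduce to $M_t-\frac{\gamma}{2}\langle M\rangle_t$, and conclude with the maximal inequality for the exponential supermartingale $e^{\gamma M_t-\frac{\gamma^2}{2}\langle M\rangle_t}$. Your bookkeeping uses the full surplus $\frac32\int_0^t\|\phi(s)\|_W^2ds$, so it gives exactly the stated coefficient $\frac12$ and rate $\frac{1}{2C\kappa}$. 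The paper's write-up instead ends with coefficient $1$ and rate $\frac{1}{4C\kappa}$, which matches the statement only after renaming the generic constant $C$.
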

\begin{proof}
	The quadratic variation of $M_t= \brak{\phi, dW_t}$ satisfies 
	$$\brak{M}_t = \sum_{k}^{M}\int_0^t |\sigma_k|^2 \brak{f_k, \phi(s)}^2 ds \le \sum_{k}^{M}|\sigma_k|^2|(I-\Delta)^{-1/2}f_k|^2\int_0^t |(I-\Delta)^{1/2}\phi(s)|^2ds. $$
	Thus, up to a constant depending on the embedding $W\rightarrow V$ we get $\brak{M}_t \le C\kappa\int_0^t \|\phi(s)\|_W^2ds$ thus we have, from the It\^{o} formula \Cref{ito-formula}, for every $\gamma >0$ 
	\begin{align*}
		\|\phi (t)\|_V^2 -  \|\phi_0\|_V^2 + 2(1- {\gamma}C\kappa)\int_0^t\|\phi(s)\|_W^2 ds - \kappa t\le 2 \int_0^t\brak{\phi , dW_s}  -  \gamma \brak{M}_t.
	\end{align*}
	Choosing $\gamma = \frac{1}{2C\kappa}$ and setting $N_t =\int_0^t\brak{\phi , dW_s}  -   \frac{\gamma}{2}\brak{M}_t$ we get 
	\begin{align*}\PP&\left(  \sup_{t\ge 0}\left(\|\phi(t)\|_V^2- \|\phi_0\|_V^2 + \int_0^t\|\phi(s)\|_W^2ds - \kappa \right) \ge R \right)\\ & \le \PP\left(\sup_{t\ge 0} \left( \int_0^t\brak{\phi , dW_s}  -   \frac{\gamma}{2}\brak{M}_t \right)\ge \frac{R}{2}\right) \\
		& \le \PP\left(\sup_{t\ge 0} e^{\gamma N_t} \ge e^{\frac{\gamma R}{2}} \right) \\
		&\le e^{-\frac{ R}{4C{\kappa}}}.
	\end{align*}
	where in the last step we have used the maximal inequality \cite[Theorem 5.9]{baldi_stochastic_2017}, applied to the supermartingale $e^{\gamma N_t}$ (see also \cite[Sec 8.2]{baldi_stochastic_2017} for a proof that $e^{\gamma N_t}$ is a supermartingale).
\end{proof}
We are now ready to introduce the coupling argument: Recall from \Cref{sec:funct-sett} the definition of the finite-dimensional orthogonal projection $P_N$ and consider $ P_N^\Delta := (I-\Delta)P_N$.
For any $\phi_0, \psi_0 \in V$ consider the following system of stochastic equations. 
\begin{equation}\label{eq:forced}
	\begin{cases}
		d(\phi -\Delta \phi) + \left(A \phi + \NN(\phi, v-\Delta \phi)  \right)dt =  dW_t \\
		d(\psi -\Delta \psi) + \left(A \psi + \NN\left(\psi, v-\Delta \psi  \right)\right)dt + \lambda \one_{ \{t \le \tau_K  \}} P^\Delta_N(\psi - \phi )=  dW_t \\
		\phi(0)= \phi_0, \quad  \psi(0)= \psi_0,
	\end{cases}
\end{equation}
where
\begin{equation*}
	\tau_K:= \inf \left\{t>0: \ \int_0^t|P^\Delta_N(\psi - \phi)|^2  ds\ge K\right\}.
\end{equation*}
Pathwise unique global weak solutions (in the sense analogous to \Cref{def-sol-H1}) of this systems living in $C(0, T; V\times V) \cap L^2(0, T; W\times W)$ for every $T>0$ can be constructed by an easy adaptation of the arguments of \Cref{sec:WP}, with analogous energy estimates. In particular the processes $\phi$ and $\psi$ $\PP-$almost surely belong to $C([0, \infty); V)$ when we equip this space with the norm $\|f\|_{C([0, \infty); V)}:= \sup_{t\ge 0} (1+t)^{-1}\|f(t)\|_V$).
We remark that the noise appearing in both equation is the same. Moreover, under the assumption of \Cref{thm:ergod} on $N, M$ and thanks to the definition of $\tau_K$, which allows to easily verify the Novikov condition, Girsanov's theorem ensure that the laws of $\psi$ and $\phi$ on $C([0, \infty); V)$ are equivalent (we give more details below), so that the joint laws form an asymptotic equivalent coupling in the sense introduced in \Cref{sec:coupling-method}. 
Let us consider $m= \psi- \phi$. It satisfies 
\begin{equation}
	\partial_t (m - \Delta m) + Am + \NN( \psi, v-\Delta \psi) -\NN(\phi, v-\Delta \phi) = \lambda \one_{\{\tau_k \ge t\}}P_N^\Delta m, 
\end{equation}
where the equality is understood in weak sense analogously to \Cref{def-sol-H1}. We now prove the following fundamental lemma 
\begin{proposition}\label{prop:convergence-sol}
	For every initial values $\phi_0, \psi_0 \in V$, there exists $N= N(\kappa)$ large enough, $\lambda, K >0$ such that 
	$$\PP\left( \lim_{t\rightarrow+\infty}\left\|\phi(t, \phi_0) - \psi(t, \psi_0)\right\|_{V}=0\right) >0.$$
\end{proposition}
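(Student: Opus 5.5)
We plan to derive a differential inequality for $\|m\|_V^2$, where $m=\psi-\phi$, and show that on a set of positive probability the nudging term together with the dissipation makes $\|m\|_V^2$ decay exponentially, while the stopping time $\tau_K$ is never reached.

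\textbf{Energy inequality for $m$.} As in the uniqueness proof of \Cref{WP_H^1}, the Lions--Magenes lemma gives an energy identity for $m$; note that the noise cancels because both equations share the same $W$. The nonlinear terms are handled exactly as in \eqref{eq:nonlinear_trick}, with the difference that the term involving $v$ now also contributes $\brak{\NN(m,v),m}$. This contribution vanishes by \eqref{N-properties 3}, since $\NN(m,v)$ tested against $m$ is zero. Using $\brak{P_N^\Delta m,m}=\|P_Nm\|_V^2$, for $t\le\tau_K$ we get
\begin{equation*}
\frac{d}{dt}\|m\|_V^2+2\|m\|_W^2+2\lambda\|P_Nm\|_V^2=-2\brak{\NN(m,\Delta m),\phi}.
\end{equation*}
We bound the right-hand side as in the uniqueness proof by $\|m\|_W^2+C'\|\phi\|_W^2\|m\|_V^2$. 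Choosing $\lambda=\lambda_N$ and applying \Cref{lem:proj-est2} gives $\|m\|_W^2+\lambda_N\|P_Nm\|_V^2\ge\lambda_N\|m\|_V^2$, so
\begin{equation*}
\frac{d}{dt}\|m\|_V^2+\bigl(\lambda_N-C'\|\phi\|_W^2\bigr)\|m\|_V^2\le0 .
\end{equation*}
Gronwall then yields
\begin{equation*}
\|m(t)\|_V^2\le\|m_0\|_V^2\exp\Bigl(-\lambda_Nt+C'\int_0^t\|\phi\|_W^2ds\Bigr)\qquad\text{for } t\le\tau_K .
\end{equation*}

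\textbf{Probabilistic control of the exponent.} This is the main step. We use \Cref{prop:exp_est}: on the event
\begin{equation*}
E_R:=\Bigl\{\tfrac12\int_0^t\|\phi\|_W^2ds\le R+\|\phi_0\|_V^2+\kappa t\ \ \forall t\ge0\Bigr\},
\end{equation*}
which has probability at least $1-e^{-R/(2C\kappa)}>0$, we have $C'\int_0^t\|\phi\|_W^2\le2C'(R+\|\phi_0\|_V^2)+2C'\kappa t$. We then choose $N=N(\kappa)$ so that $\lambda_N\ge4C'\kappa$. This is where $N$ depends only on $\kappa$ and not on $M$ or the $\sigma_k$. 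On $E_R$ this gives
\begin{equation*}
\|m(t)\|_V^2\le\|m_0\|_V^2e^{2C'(R+\|\phi_0\|_V^2)}e^{-\lambda_Nt/2}\qquad\text{for } t\le\tau_K .
\end{equation*}

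\textbf{Ruling out $\tau_K<\infty$.} Next we show $\tau_K=\infty$ on $E_R$ for $K$ large. We have $|P_N^\Delta m|^2\le C_N\|P_Nm\|_V^2\le C_N\|m\|_V^2$, because $P_N$ maps into the finite-dimensional space $V_N\subset D(A)$, on which all norms are equivalent. Hence on $E_R$,
\begin{equation*}
\int_0^{t\wedge\tau_K}|P_N^\Delta m|^2\le \frac{2C_N}{\lambda_N}\|m_0\|_V^2e^{2C'(R+\|\phi_0\|_V^2)}=:K_0 .
\end{equation*}
Choosing $K>K_0$, the integral never reaches $K$, so $\tau_K=\infty$ and the exponential decay holds for all $t$. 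Therefore $\PP(\lim_{t\to\infty}\|m(t)\|_V=0)\ge\PP(E_R)>0$.

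The subtle point is the order of the choices: $N$ is fixed from $\kappa$ alone, then $\lambda=\lambda_N$, then $R$, and finally $K$, which depends on the initial data. This ordering is acceptable because the statement allows $K$ to depend on $\phi_0,\psi_0$. Independently of this proposition, the bound $\int_0^\infty|P_N^\Delta m|^2\le K$ is what gives Novikov's condition for the Girsanov step.
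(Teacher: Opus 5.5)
Your proposal is correct and follows the same route as the paper. The steps match: the Lions--Magenes energy identity for $m$ with the cancellation \eqref{eq:nonlinear_trick}, the Young bound on $\brak{\NN(m,\Delta m),\phi}$, \Cref{lem:proj-est2} with $\lambda\sim\lambda_N$, Gronwall up to $\tau_K$, the event $E_R$ from \Cref{prop:exp_est}, the choice $\lambda_N\gtrsim C'\kappa$, and $K$ large so that $E_R\subset\{\tau_K=\infty\}$.

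Your write-up is slightly more careful than the paper's. You keep the factor $2$ on the nonlinear term and note explicitly that $\brak{\NN(m,v),m}=0$. You also justify $\tau_K=\infty$ through the finite-dimensional bound $|P_N^\Delta m|\le C_N\|m\|_V$ and integration of the exponential decay, which shows that $K$ also depends on $\|\psi_0-\phi_0\|_V$.
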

\begin{proof}
	Thanks to the regularities of $\phi, \psi$, analogously as observed in \Cref{proof_uniq},
	$$(I-\Delta)^{1/2}m \in L^\infty(0, T; D(A^{1/2}(I-\Delta)^{-1/2})),$$
	$$\partial_t (I-\Delta)^{1/2}m \in L^2(0, T; D(A^{1/2}(I-\Delta)^{-1/2})').$$
	Lions-Magenes lemma \cite[Chapter III, Lemma 1.2]{temam_navierstokes_}, yields that $m$ is almost everywhere equal to a continuous function into $V$ and (recalling in particular \eqref{eq:nonlinear_trick}) it holds
	$$\frac{d}{dt}\|m\|_V^2 + 2|A^{1/2} m |^2 + \brak{\NN(m, \Delta m), \phi} + 2\lambda \one _{\{\tau_K \ge t\}}\brak{P_N^\Delta m,m} =0.$$
	The same estimates as in \Cref{proof_uniq} give
	\begin{align*}
		\brak{\NN(m, \Delta m), \phi} &\le \frac{1}{4}  |A^{1/2}m| ^2 + C'|A^{1/2}\phi|^2 |\nabla m|^2,
	\end{align*}
	where $C'$ is an absolute constant independent of everything. On the other hand, choosing $\lambda= \lambda_N/2$ we have, thanks to \Cref{lem:proj-est2} 
	\begin{align*}
		|A^{1/2} m |^2 + \lambda_N \one _{\{\tau_K \ge t\}}\brak{P_N^\Delta m,m} \ge \lambda_N\one_{\left\{\tau_K \le t\right\}} \| m\|_V^2.
	\end{align*}
	Thus by Gronwall we obtain, for every $t\le \tau_K$ 
	$$\|m(t)\|_V^2 \le \|m_0\|_V^2  \exp\left( C'\int_0^t |A^{1/2}\phi(s)|^2 ds - t \lambda_N\right).$$
	Now consider the set 
	\begin{align*}
		E_R=\left\{ \omega\in \Omega \ : \  \sup_{t\ge 0} \left(\|\phi(t)\|_V^2 + \int_0^t |A^{1/2}\phi(s)|^2 ds -  2\kappa t - \|\phi_0\|_V^2\right) \le R\right\}.
	\end{align*}
	In view of \Cref{prop:exp_est}, for $R:= R(\|v\|_V^2, \kappa, |\phi_0|)$ large enough this set has positive probability, moreover, for $\omega \in E_R$ it holds 
	\begin{equation*}
		\int_0^t |A^{1/2}\phi(s)|^2 ds \le  R +  2\kappa t + \|\phi_0\|_V^2.
	\end{equation*}
	Thus, still on $E_R$ it holds, for every $t\le \tau_K$
	\begin{equation*}
		\|m(t)\|_V^2 \le e^{C'\left(R + \|\phi_0\|_V^2 \right)} e^{ (2C'\kappa-\lambda_N)t}.
	\end{equation*}
	By picking $N:=N(\kappa)>1$ large enough such that 
	$$2C'\kappa - \frac{\lambda_N}{2} \le 0,$$
	we obtain 
	$$\|m(t)\|_V^2 \le \|m(0)\|_V^2 \exp\left(C'( R + \|\phi_0\|_V^2)\right) e^{-\frac{\rho_N}{2} t}, \qquad t\le \tau_K.$$
	It follows that for $K:= K(R, \kappa, |\phi_0|^2)$ sufficiently large $E_R\subset \{\tau_K = \infty\}$ and on the set $E_R$ of positive probability, $m(t)\rightarrow 0 $ in $V$ as $t\rightarrow + \infty$.  
\end{proof}
We have all ingredients to prove ergodicity
\begin{proof}[Proof of \Cref{thm:ergod}]
	Fix $T>0$ and let $t_n = nT$. Then consider 
	$$\boldsymbol{\phi} = \left(\phi(t_1, \phi_0),  \phi(t_2; \phi_0), \ldots \right)\qquad \boldsymbol{\psi} = \left(\psi(t_1, \psi_0),  \psi(t_2; \psi_0), \ldots \right)$$
	and let $\boldsymbol{m}$, $\boldsymbol{n}$ be their laws on $V^\N$. In view of Girsanov's theorem \cite[Theorem 10.14]{daprato_stochastic_} and the definition of $\tau_K$, for every $\lambda, K\ge 0$, 
	$$W_t, \qquad W_t + \lambda \int_0^t\one_{s \le \tau_K}P_N^\Delta (\psi(s) - \phi(s))ds$$
	have equivalent laws on $C([0, \infty), V')$, provided that $(I-\Delta)P^{N}(V)\subset \operatorname{span}\left(\{f_1, \ldots f_M\}\right) $, which is exactly our assumption. As a consequence (\cite[Theorem 10.18]{daprato_stochastic_}), $\psi$ and $\phi$ have equivalent laws on $C([0, \infty); V)$. Considering now the law $\Gamma$ on $V^\N \times V^{\N}$ given by the couple $(\boldsymbol{\phi}, \boldsymbol{\psi})$ it follows that $\Gamma \in \tilde C(\delta_{\phi_0}P^{\N}, \delta_{\psi_0}P^{\N})$. Recalling the definitions in \Cref{sec:coupling-method}, we see that for $\rho(x,y):= \|(x-y)\|_V$,  $\G_\rho$ determines measure on $(V, \rho)$ and $D_\rho$ is a measurable set of $V^\N \times V^\N$. Moreover for each $\phi_0, \psi_0 \in V$, thanks to \Cref{prop:convergence-sol} we have proved that there exists $\lambda, K>0$ and $N=N(\kappa)$ large enough such that $\phi(t; \phi_0)- \psi(t, \psi_0)\rightarrow0$ in $V$ for $t\rightarrow + \infty$ on a set of positive probability, namely $\Gamma(D_\rho)>0$. Thanks again to the assumption that $(I-\Delta)P_N(V)\subset \operatorname{span}\left(\{f_1, \ldots f_M\}\right) $, we conclude that $\Gamma$ is an asymptotic equivalent coupling. It follows then from \Cref{ergodicity_cor} that there exists at most one ergodic invariant measure $\mu$ for \eqref{eq:main} on $V$.\\
\end{proof}

\begin{acknowledgements}
The research of F.F. and C.C. is funded
by the European Union (ERC, NoisyFluid, No. 101053472). Views and opinions expressed are however those of the authors only and do not necessarily reflect those of the European Union or the European Research Council. Neither the European Union nor the granting authority can be held responsible for them.
\end{acknowledgements}
\bibliographystyle{plain}
\bibliography{references, references-2}

\end{document}